\newtheorem{theorem}{Theorem}[section]
\newtheorem{corollary}[theorem]{Corollary}
\newtheorem{lemma}[theorem]{Lemma}
\newtheorem{question}[theorem]{Question}
\theoremstyle{definition}
\newtheorem{definition}[theorem]{Definition}
\newtheorem{notation}[theorem]{Notation}
\theoremstyle{remark}
\newtheorem{remark}[theorem]{Remark}
\newtheorem{example}[theorem]{Example}
\newcommand{\N}{\mathbb{N}}
\newcommand{\Z}{\mathbb{Z}}
\DeclareMathOperator{\Lab}{Lab}
\DeclareMathOperator{\vol}{vol}
\newcommand{\abs}[1]{\vert #1 \vert}
\newcommand{\norm}[1]{\left\lVert#1\right\rVert}
\newcommand{\defeq}{\mathrel{\mathop{:}}=}
\newcommand\Set[2]{\{\,#1\mid#2\,\}}
\begin{document}

\title{Bounded subgroups of relatively finitely presented groups}

\date{\today}		   	
		   			
\subjclass[2010]{Primary 20F67;
%Hyperbolic groups and nonpositively curved groups
                 Secondary 20F65}
%Geometric group theory

\keywords{Relatively hyperbolic groups, Growth of groups, coarse geometry}

\author[E.~Schesler]{Eduard Schesler}
\address{Fakult\"{a}t f\"{u}r Mathematik und Informatik, FernUniversit\"{a}t in Hagen, 58084 Hagen, Germany}
\email{eduard.schesler@fernuni-hagen.de}

\thanks{The author was partially supported by the DFG grant WI 4079/4 within the SPP 2026 Geometry at infinity.}

\begin{abstract}
Given a finitely generated group $G$ that is relatively finitely presented with respect to a collection
%$H_{\Lambda} = \Set{H_{\lambda}}{\lambda \in \Lambda}$
of peripheral subgroups, we prove that every infinite subgroup $H$ of $G$ that is bounded in the relative Cayley graph of $G$ is conjugate into a peripheral subgroup.
As an application, we obtain a trichotomy for subgroups of relatively hyperbolic groups.
Moreover we prove the existence of the relative exponential growth rate for all subgroups of limit groups.
\end{abstract}

\maketitle

\section{Introduction}

The notion of a group $G$ that is hyperbolic relative to a finite set $H_{\Lambda}$ of its subgroups was introduced by Gromov~\cite{Gromov87}
%[Definition 6.8.A]
as a generalization of a word hyperbolic group.
In his definition, the groups $H \in H_{\Lambda}$ appear as stabilizers of points at infinity of a certain hyperbolic space $X$ the group $G$ acts on.
Since then, the study of relatively hyperbolic groups remained an active field of research and several characterizations of relative hyperbolicity were introduced by Bowditch~\cite{bowditch12}, Farb~\cite{Farb98}, and Osin~\cite{Osin06}.
In the latter work, Osin uses the concept of relative presentations in order to define the relative hyperbolicity of a group $G$ with respect to a set $H_{\Lambda} = \Set{H_{\lambda}}{\lambda \in \Lambda}$ of its subgroups.
To make this more precise, let $X \subseteq G$ be a symmetric subset such that $G$ is generated by $\bigcup \limits_{\lambda \in \Lambda} H_{\lambda} \cup X$.
Then we obtain a canonical epimorphism
\[
\varepsilon \colon F \defeq (\ast_{\lambda \in \Lambda} \widetilde{H}_{\lambda}) \ast F(X) \rightarrow G,
\]
where the groups $\widetilde{H}_{\lambda}$ are disjoint isomorphic copies of $H_{\lambda}$ and $F(X)$ denotes the free group over $X$.
Consider a subset $\mathcal{R} \subseteq F$ whose normal closure is the kernel of $\varepsilon$.
Then $\mathcal{R}$ gives rise to a so-called \emph{relative presentation of $G$ with respect to $H_{\Lambda}$} of the form
\begin{equation}\label{eq:rel-praes-intro-long}
\langle X, \mathcal{H}\ \vert \ S = 1, S \in \bigcup \limits_{\lambda \in \Lambda} \mathcal{S}_{\lambda},\ R = 1, R \in \mathcal{R} \rangle,
\end{equation}
where $\mathcal{H} \defeq \bigcup \limits_{\lambda \in \Lambda} (\widetilde{H}_{\lambda} \setminus \{1\})$ and $\mathcal{S}_{\lambda}$ is the set of all relations over the alphabet $\widetilde{H}_{\lambda}$.
In this framework, $G$ is said to be \emph{hyperbolic relative to $H_{\Lambda}$} if $X$ and $\mathcal{R}$ can be chosen to be finite and~\eqref{eq:rel-praes-intro-long} admits a \emph{linear relative Dehn function}.
That is, there is some $C > 0$ such that for every word $w$ of length at most $\ell$ over $X \cup \mathcal{H}$ that represents the identity in $G$, there is an equality of the form
\begin{equation}\label{eq:prod-conjugates}
w =_F \prod \limits_{i=1}^{k} f_i^{-1} R_i^{\pm 1} f_i
\end{equation}
that holds in $F$, where $k \leq C \ell$, $f_i \in F$ and $R_i \in \mathcal{R}$.
Note that in general, there is no reason to expect that for every $\ell \in \N$ and every relation $w$ of length at most $\ell$ there is a uniform upper bound $n \in \N$ such that $w$ can be written as in~\eqref{eq:prod-conjugates} with $k \leq n$.
Even if $X$ and $\mathcal{R}$ are finite, in which case we say that~\eqref{eq:rel-praes-intro-long} is a \emph{finite relative presentation for $G$}, there are easy examples where there is no such $n$, see~\cite[Example 1.3]{Osin06}.

In this paper we study groups $G$ that admit a finite relative presentation as in~\eqref{eq:rel-praes-intro-long} whose relative Dehn function $\delta_{G,H_{\Lambda}}^{rel}$ is well-defined. 
This means that for every $\ell \in \N$ there is a minimal number $\delta_{G,H_{\Lambda}}^{rel}(\ell)$, such that for every relation $w$ of length at most $\ell$ there is an expression of the form~\eqref{eq:prod-conjugates} with $k \leq \delta_{G,H_{\Lambda}}^{rel}(\ell)$.
Examples of relatively finitely presented groups that admit a well-defined, non-linear relative Dehn function were considered in~\cite{Hughes2021}.
The study of groups with a well-defined relative Dehn function, typically involves considerations in the so-called \emph{relative Cayley graph} $\Gamma(G,X \cup \mathcal{H})$ of $G$.
Since $X \cup \mathcal{H}$ can be (and usually is) infinite, it is natural to ask the following.

\begin{question}\label{que:intro}
Which subgroups of $G$ have bounded diameter in $\Gamma(G,X \cup \mathcal{H})$?
\end{question}

Note that, next to the finite subgroups of $G$, every subgroup of $G$ that can be conjugated into some of the groups $H_{\lambda}$ has bounded diameter in $\Gamma(G,X \cup \mathcal{H})$.
It turns out that for finitely generated $G$, the existence of a well-defined relative Dehn function is enough to deduce that there are no further examples of subgroups of $G$ whose diameter in $\Gamma(G,X \cup \mathcal{H})$ is finite.

\begin{theorem}\label{introthm:main}
Let $G$ be a finitely generated group.
Suppose that $G$ is relatively finitely presented with respect to a collection $H_{\Lambda} = \Set{H_{\lambda}}{\lambda \in \Lambda}$ of its subgroups and that the relative Dehn function $\delta_{G,H_{\Lambda}}^{rel}$ is well-defined.
Then every subgroup $K \leq G$ satisfies exactly one of the following conditions:
\begin{enumerate}
\item[\rm{1)}] $K$ is finite.
\item[\rm{2)}] $K$ is infinite and conjugate to a subgroup of some $H_{\lambda}$.
\item[\rm{3)}] $K$ is unbounded in $\Gamma(G, X \cup \mathcal{H})$.
\end{enumerate}
\end{theorem}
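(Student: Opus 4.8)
The three conditions are visibly mutually exclusive: a finite subgroup is bounded in $\Gamma(G,X\cup\mathcal H)$, and so is a subgroup conjugate into some $H_\lambda$, since $H_\lambda$ has diameter at most $1$ in $\Gamma(G,X\cup\mathcal H)$ (any two of its elements differ by a letter of $\mathcal H$) and conjugating by a fixed element changes diameters in $\Gamma(G,X\cup\mathcal H)$ by a bounded amount; and an infinite subgroup conjugate into a peripheral is not finite. So the whole content is the implication: \emph{if $K\le G$ is infinite and bounded in $\Gamma(G,X\cup\mathcal H)$, then $K$ is conjugate into some $H_\lambda$} --- the theorem from the abstract. The plan is to prove this. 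As a first reduction, since $G$ is finitely generated and $X$ is finite, I would fix a finite generating set of $G$ containing $X$, enlarge $X$ to it (adjoining the finitely many words over $X\cup\mathcal H$ that define the new generators), and observe that this yields another finite relative presentation with respect to $H_\Lambda$ that changes $\Gamma(G,X\cup\mathcal H)$ only up to bi-Lipschitz equivalence and does not affect well-definedness of the relative Dehn function, that property being independent of the chosen finite relative presentation. So I may assume $X$ generates $G$; write $\abs{\cdot}$ for the associated word length and set $D\defeq\mathrm{diam}_{\Gamma(G,X\cup\mathcal H)}(K)<\infty$.

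Next I would locate a long peripheral syllable inside elements of $K$ of large word length. As $K$ is infinite and balls in $G$ are finite, choose $h_n\in K$ with $\abs{h_n}\to\infty$; each $h_n$ is read by a geodesic word $w_n$ over $X\cup\mathcal H$ with $\abs{w_n}\le D$, hence by an alternating product of at most $D$ syllables --- maximal $X$-subwords of total length $\le D$, and single $\mathcal H$-letters. Since $\abs{h_n}\to\infty$ while the $X$-part stays bounded, a pigeonhole argument over the bounded number of syllables gives, after passing to a subsequence, a fixed syllable position carrying an $\mathcal H$-letter and a factorization $h_n=u_n a_n v_n$ in $G$ with $a_n\in H_{\lambda_n}\setminus\{1\}$ the group element of that letter, $\abs{a_n}\to\infty$, and $\abs{u_n}_{X\cup\mathcal H},\abs{v_n}_{X\cup\mathcal H}\le D$. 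In the language of $\Gamma(G,X\cup\mathcal H)$, the $w_n$-path from $1$ to $h_n$ traverses the peripheral coset $C_n\defeq u_nH_{\lambda_n}$ along the long element $a_n$.

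The core step is to pin these traversals to a single coset. For $n\neq m$ I would form the word $W_{n,m}\defeq w_n\,\sigma_{n,m}\,w_m^{-1}$, where $\sigma_{n,m}$ reads $h_n^{-1}h_m\in K$ geodesically over $X\cup\mathcal H$; then $W_{n,m}$ represents $1$ in $G$, and since $d_{X\cup\mathcal H}(h_n,h_m)\le D$ we get $\abs{W_{n,m}}_{X\cup\mathcal H}\le 3D$. This is where, and essentially the only way in which, the hypothesis enters: as $\delta^{rel}_{G,H_\Lambda}$ is well-defined, $W_{n,m}$ bounds a reduced van Kampen diagram $\Delta_{n,m}$ over the relative presentation with at most $N_0\defeq\delta^{rel}_{G,H_\Lambda}(3D)$ cells labelled by relators of $\mathcal R$ --- a bound independent of $n,m$. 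Now $a_n$ and $a_m$ occur as peripheral components of $\partial\Delta_{n,m}$, and by the standard estimate for reduced diagrams over a finite relative presentation --- the total $\abs{\cdot}$-length of any family of mutually isolated $H_\mu$-components of $\partial\Delta$ is at most $C\cdot N(\Delta)$ for a constant $C$ depending only on the (finite) relative presentation, an inequality needing only finiteness of the presentation and \emph{not} relative hyperbolicity --- every peripheral component of $\partial\Delta_{n,m}$ whose group element has length $>C N_0$ is connected inside $\Delta_{n,m}$ to another peripheral component of the boundary. Since $\abs{a_n}\to\infty$, the component $a_n$ is non-isolated for all large $n$; the components available to connect to are the boundedly many ones contributed by $u_n,v_n,u_m,v_m,\sigma_{n,m}$ together with $a_m$, and all of these except $a_m$ have bounded length. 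Because a connecting path carries a label representing an element of $H_{\lambda_n}$ and meets at most $N_0$ cells, each of bounded boundary length, ``being connected'' identifies $C_n$ with one of finitely many cosets up to a uniformly bounded correction; passing to a further subsequence I conclude that all the $C_n$ equal one fixed peripheral coset $C=gH_\lambda$.

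Finally I would upgrade this to $g^{-1}Kg\le H_\lambda$. Given any $k\in K$, choose $n$ with $\abs{h_n}$ much larger than $\abs{k}$, so that $k h_n k^{-1}\in K$ again has large word length and, read from the basepoint, traverses the coset $k\cdot C_n=kgH_\lambda$ along the same long element $a_n$; rerunning the diagram argument with $kh_nk^{-1}$ in place of $h_m$ forces the coset traversed by $kh_nk^{-1}$ to be $C=gH_\lambda$ as well, whence $kgH_\lambda=gH_\lambda$, i.e.\ $g^{-1}kg\in H_\lambda$, and as $k$ was arbitrary this gives $g^{-1}Kg\le H_\lambda$, the second alternative. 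The step I expect to be the main obstacle is the combinatorial core of the third paragraph: lacking relative hyperbolicity, one has no quasiconvexity or malnormality of peripheral cosets to lean on, so one must extract from well-definedness of $\delta^{rel}_{G,H_\Lambda}$ alone --- via the bounded number of $\mathcal R$-cells and the isolated-component estimate --- both that a large-length element of $K$ traverses a canonically determined peripheral coset, and enough control on the ``bounded corrections'' to make the coincidence of cosets, and hence the containment $g^{-1}Kg\le H_\lambda$, hold on the nose rather than merely coarsely.
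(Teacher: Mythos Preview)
Your strategy---bound the number of $\mathcal R$-cells via $\delta^{rel}_{G,H_\Lambda}(3D)$ and then invoke Osin's isolated-component estimate---is the right toolkit, and the paper's proof ultimately rests on that same estimate (stated there as Lemma~\ref{lem:finite-cycles}). But the route you take is genuinely different from the paper's, and your sketch has a real gap precisely at the step you yourself flag.

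The problem is the coset identification. First, the claim that the components contributed by $u_n,v_n,u_m,v_m,\sigma_{n,m}$ ``have bounded length'' is unjustified if ``length'' means $X$-length (which is what the isolated-component estimate controls): nothing prevents $u_m$ or $\sigma_{n,m}$ from containing an $\mathcal H$-letter of $X$-length comparable to $\abs{a_n}$. You are right that, since $w_n$ is geodesic, $a_n$ cannot be connected to another component of $w_n$; but it can perfectly well be connected to a $\lambda$-component of $\sigma_{n,m}$, and then $C_n$ equals $h_n\cdot(\text{prefix of }\sigma_{n,m})\cdot H_\lambda$, a coset that moves with $n$. No amount of passing to subsequences fixes this, because both $h_n$ and $\sigma_{n,m}$ vary. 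The final conjugation step inherits the same defect: the word $k\,w_n\,k^{-1}$ is not geodesic, so the previous argument does not apply to it as written, and a geodesic representative of $kh_nk^{-1}$ need not visit $kC_n$ at all.

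The paper circumvents exactly this difficulty by a minimality trick rather than by a direct coset comparison. It takes $m$ minimal such that \emph{some conjugate} of $K$ contains infinitely many elements of $(Y\cup\mathcal H)$-length $m$ for \emph{some} finite relative generating set $Y$, and whenever a letter at a fixed position is constant or $X$-short it absorbs that letter into $Y$ (and reconjugates if needed), dropping $m$. This minimisation over both conjugates and generating sets is what replaces your ``pin down a single coset'' step: it forces either $m=1$, giving $\abs{gKg^{-1}\cap H_\eta}=\infty$ directly, or a sequence of length-$m$ words in which \emph{every} $\mathcal H$-letter has $X$-length $\to\infty$ (the ``alternating growth condition''). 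Such sequences can then be concatenated to arbitrary length, and a final application of the isolated-component estimate to these long regular words yields unboundedness, a contradiction. A second, shorter pass of the same mechanism upgrades the infinite intersection to $gKg^{-1}\le H_\eta$. Your outline would need a device of comparable strength to get exact, rather than merely coarse, coset coincidence.
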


Note for example that if some of the subgroups $H_{\lambda}$ in Theorem~\ref{introthm:main} is infinite, then there is no subgroup $K \leq G$ that contains $H_{\lambda}$ as a proper subgroup of finite index.
This also follows from the fact that each $H_{\lambda}$ is almost malnormal, which is shown in~\cite[Proposition 2.36]{Osin06}.

If the group $G$ in Theorem~\ref{introthm:main} is relatively hyperbolic with respect to $H_{\Lambda}$, then it is known that a subgroup $K \leq G$ with infinite diameter in $\Gamma(G, X \cup \mathcal{H})$ contains a loxodromic element (see~\cite[Theorem~1.1]{Osin16} together with~\cite[Proposition~5.2]{Osin16}).
Recall that an element $g \in G$ is called \emph{loxodromic}
%for the left multiplication action of $G$ on $\Gamma(G,X \cup \mathcal{H})$
if the map
\[
\Z \rightarrow \Gamma(G,X \cup \mathcal{H}),\ n \mapsto g^n
\]
is a quasiisometrical embedding.
We therefore obtain the following classification of subgroups of relatively hyperbolic groups which, to the best of my knowledge, was not recorded before.

\begin{corollary}\label{introcor:main}
Let $G$ be a finitely generated group.
Suppose that $G$ is relatively hyperbolic with respect to a collection $H_{\Lambda} = \Set{H_{\lambda}}{\lambda \in \Lambda}$ of its subgroups.
Then every subgroup $K \leq G$ satisfies exactly one of the following conditions:
\begin{enumerate}
\item[\rm{1)}] $K$ is finite.
\item[\rm{2)}] $K$ is infinite and conjugate to a subgroup of some $H_{\lambda}$.
\item[\rm{3)}] $K$ contains a loxodromic element.
\end{enumerate}
\end{corollary}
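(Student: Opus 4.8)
The plan is to deduce Corollary~\ref{introcor:main} from Theorem~\ref{introthm:main} together with the description of unbounded subgroups recalled just before the statement. First I would fix a finite relative generating set $X$ for $G$ with respect to $H_\Lambda$; such an $X$ exists precisely because $G$ is relatively hyperbolic with respect to $H_\Lambda$, and by Osin's definition this yields a finite relative presentation whose relative Dehn function $\delta_{G,H_\Lambda}^{rel}$ is linear, in particular well-defined. Hence the hypotheses of Theorem~\ref{introthm:main} are met, and every subgroup $K \leq G$ satisfies \emph{exactly one} of: $K$ is finite; $K$ is infinite and conjugate into some $H_\lambda$; or $K$ is unbounded in $\Gamma(G, X\cup\mathcal H)$.

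Next I would upgrade the third alternative. Suppose $K$ has infinite diameter in $\Gamma(G, X\cup\mathcal H)$. Since $K$ contains the identity, its orbit under left translation equals $K$, so this orbit is unbounded. The left-translation action of $G$ on $\Gamma(G, X\cup\mathcal H)$ is acylindrical for a relatively hyperbolic group, and acylindricity passes to the subgroup $K$; therefore \cite[Theorem~1.1]{Osin16} applies to the action of $K$, and an action with unbounded orbits forces $K$ to contain an element $g$ that is loxodromic for this action, regardless of whether $K$ is virtually cyclic or contains infinitely many independent loxodromic elements. By \cite[Proposition~5.2]{Osin16}, being loxodromic for the action is equivalent to the map $\Z \to \Gamma(G, X\cup\mathcal H)$, $n \mapsto g^n$, being a quasi-isometric embedding, i.e.\ $g$ is loxodromic in the sense of the corollary. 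So case~3) of the corollary holds, and it does not depend on the chosen finite $X$ since the relative Cayley graphs attached to different finite relative generating sets are $G$-equivariantly quasi-isometric.

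It remains to check that the three conditions are mutually exclusive, which promotes the ``exactly one'' of Theorem~\ref{introthm:main} to the statement of the corollary. A finite subgroup is bounded in $\Gamma(G, X\cup\mathcal H)$ and has no element of infinite order, while a loxodromic element has infinite order; so 1) excludes 3), and since 2) requires $K$ infinite, 1) excludes 2). If $K$ is conjugate to a subgroup of $H_\lambda$, say $K \le g H_\lambda g^{-1}$, then every element of $K$ lies within distance $2\abs{g}+1$ of $1$ in $\Gamma(G, X\cup\mathcal H)$, where $\abs{\cdot}$ denotes word length with respect to $X$, because each nontrivial element of $\widetilde H_\lambda$ is joined to $1$ by a single edge; hence $K$ is bounded and cannot contain a loxodromic element, whose powers form an unbounded set. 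So 2) excludes 3) as well, and the trichotomy is exact.

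I do not expect a genuine obstacle here: the real content sits in Theorem~\ref{introthm:main} and in \cite{Osin16}. The only points that require care are recording that the left-translation action of a relatively hyperbolic group on its relative Cayley graph is acylindrical (so that \cite[Theorem~1.1]{Osin16} may be invoked for the subgroup $K$) and the elementary bookkeeping of mutual exclusivity; neither is difficult.
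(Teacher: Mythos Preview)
Your argument is correct and follows the same route as the paper: apply Theorem~\ref{introthm:main} and then upgrade the unbounded alternative via \cite[Theorem~1.1]{Osin16}, using that the $G$-action on the (hyperbolic) relative Cayley graph is acylindrical. One small correction: in the paper, \cite[Proposition~5.2]{Osin16} is cited for the acylindricity of the action on $\Gamma(G, X\cup\mathcal{H})$, not for the equivalence you attribute to it (that equivalence is just the definition of a loxodromic element); also make sure to record that $\Gamma(G, X\cup\mathcal{H})$ is hyperbolic, since this hypothesis is needed to invoke \cite[Theorem~1.1]{Osin16}.
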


As an application of Corollary~\ref{introcor:main}, we consider relative exponential growth rates in finitely generated groups.
Recall that for a finitely generated group $G$ and a finite generating set $X$ of $G$, the \emph{growth function} $\beta^X_G \colon \N \rightarrow \N$ of $G$ with respect to $X$ is defined by $\beta^X_G(n) = \abs{B^X_G(n)}$, where $B^X_G(n)$ denotes the set of all elements of $G$ that are represented by words of length at most $n$ in the generators of $X$ and $X^{-1}$.
Using Fekete's Lemma, it is easy to see that the limit $\lim \limits_{n \rightarrow \infty} \sqrt[n]{\beta^X_G(n)}$, known as the exponential growth rate of $G$ with respect to $X$, always exist (see for example~\cite{Milnor68}).
Given a subgroup $H \leq G$, a relative analogue of the exponential growth rate is obtained by counting the elements in the relative balls $B^X_H(n) \defeq B^X_G(n) \cap H$.
The resulting function
\[
\beta^X_H \colon \N \rightarrow \N,\ n \mapsto \abs{B^X_H(n)}
\]
is called the \emph{relative growth function} of $H$ with respect to $X$.
In~\cite[Remark~3.1]{Olshanskii17} Olshanskii pointed out that, unlike in the non-relative case, the limit $\lim \limits_{n \rightarrow \infty} \sqrt[n]{\beta^X_H(n)}$ does not exist in general.
As a consequence, the \emph{relative exponential growth rate of $H$ in $G$ with respect to $X$} is typically defined as $\limsup \limits_{n \rightarrow \infty} \sqrt[n]{\beta^X_H(n)}$.
Nevertheless, in many cases where the relative exponential growth rate is studied in the literature (see for example~\cite{Cohen82},~\cite{Grigorchuk80b},~\cite{Olshanskii17},~\cite{Sharp98},~\cite{CoulonDalBoSambusetti18},~\cite{DahmaniFuterWise19} where $G$ is free or hyperbolic) the limit $\lim \limits_{n \rightarrow \infty} \sqrt[n]{\beta^X_H(n)}$ is known to exists, in which case we say that the relative exponential growth rate of $H$ in $G$ exists with respect to $X$.
In the case where $G$ is a free group, the existence of the relative exponential growth rate was proven by Olshanskii in~\cite{Olshanskii17}, extending prior results of Cohen~\cite{Cohen82} and Grigorchuk~\cite{Grigorchuk80b} who have independently proven the existence for normal subgroups of $G$.
More recently, these existence results where generalized by the author to the case where $G$ is a finitely generated acylindrically hyperbolic group and $H$ is a subgroup that contains a generalized loxodromic element of $G$, see~\cite{Schesler22}.
By combining this with Corollary~\ref{introcor:main}, we will be able conclude the following.

\begin{theorem}\label{introthm:existence-subexponential}
Let $G$ be a finitely generated group that is relatively hyperbolic with respect to a collection $H_{\Lambda} = \Set{H_{\lambda}}{\lambda \in \Lambda}$ of its subgroups.
Suppose that each of the groups $H_{\lambda}$ has subexponential growth.
Then the relative exponential growth rate of every subgroup $H \leq G$ exists with respect to every finite generating set of $G$.
\end{theorem}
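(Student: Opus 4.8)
The plan is to reduce, via the trichotomy of Corollary~\ref{introcor:main}, to the author's existence theorem~\cite{Schesler22} for acylindrically hyperbolic groups, treating separately the subgroups whose relative growth is automatically subexponential. First I would dispose of the case in which $G$ itself has subexponential growth: then for every $H\le G$ and every finite generating set $X$ of $G$ we have $1\le\beta^X_H(n)\le\beta^X_G(n)=e^{o(n)}$, hence $\sqrt[n]{\beta^X_H(n)}\to 1$. So I may assume that $G$ has exponential growth. In that case $G$ is acylindrically hyperbolic: by Osin's structure theory~\cite{Osin16}, a relatively hyperbolic group is acylindrically hyperbolic unless it is virtually cyclic or equal to one of its peripheral subgroups, and here both exceptions are excluded, the first because virtually cyclic groups have linear growth and the second because $G=H_\lambda$ would force $G$ to have subexponential growth by our hypothesis on the $H_\lambda$. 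From now on fix a finite generating set $X$ of $G$ and a subgroup $H\le G$, and distinguish the three cases of Corollary~\ref{introcor:main}.

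If $H$ is finite, then $\beta^X_H(n)$ is eventually constant and $\sqrt[n]{\beta^X_H(n)}\to 1$. If $H$ is infinite and conjugate into some $H_\lambda$, say $H\le gH_\lambda g^{-1}$, I would invoke the standard fact that the peripheral subgroups of a finitely generated relatively hyperbolic group are finitely generated and undistorted in $G$ (see~\cite{Osin06}). Choosing a finite generating set $Y$ of $H_\lambda$ and a constant $C\ge 1$ with $\abs{h}_Y\le C\abs{h}_X+C$ for all $h\in H_\lambda$, any $h\in H$ with $\abs{h}_X\le n$ satisfies $g^{-1}hg\in H_\lambda$ and $\abs{g^{-1}hg}_Y\le C(n+2\abs{g}_X)+C$, so, as $h\mapsto g^{-1}hg$ is injective,
\[
\beta^X_H(n)\;\le\;\beta^Y_{H_\lambda}\bigl(C(n+2\abs{g}_X)+C\bigr),
\]
where $\beta^Y_{H_\lambda}$ denotes the ordinary growth function of $H_\lambda$. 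Since $H_\lambda$ has subexponential growth the right-hand side is $e^{o(n)}$, and together with $\beta^X_H(n)\ge 1$ this again gives $\sqrt[n]{\beta^X_H(n)}\to 1$. I expect this case to be the main obstacle: without undistortion of the peripheral subgroups the estimate collapses — a distorted copy of $\Z$ inside a finitely generated group can have exponential relative growth, or none at all — so this is precisely where relative hyperbolicity is used beyond the trichotomy itself.

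Finally, if $H$ contains a loxodromic element of $\Gamma(G,X\cup\mathcal H)$, then, $G$ being relatively hyperbolic, this element is a generalized loxodromic element of $G$ (see~\cite{Osin16}); thus $H$ is a subgroup of the finitely generated acylindrically hyperbolic group $G$ containing a generalized loxodromic element of $G$, and~\cite{Schesler22} shows that the relative exponential growth rate of $H$ in $G$ exists with respect to every finite generating set of $G$. Since the three cases of Corollary~\ref{introcor:main} are exhaustive, this finishes the proof. The only inputs that are not purely formal are the undistortion of peripheral subgroups, the identification of loxodromic elements of the relative Cayley graph with generalized loxodromic elements of $G$, and the theorem of~\cite{Schesler22}; the rest is a routine case distinction and growth estimate.
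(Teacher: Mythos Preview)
Your proof is correct and follows essentially the same approach as the paper's: both arguments run through the three cases of Corollary~\ref{introcor:main}, use finite generation and undistortion of peripheral subgroups from~\cite{Osin06} to force the limit to be $1$ in the parabolic case, and invoke~\cite{Osin16} together with~\cite{Schesler22} in the loxodromic case. The only difference is organizational: you dispose of the possibility that $G$ has subexponential growth at the outset and then argue once that $G$ is acylindrically hyperbolic, whereas the paper embeds the virtually-cyclic/acylindrically-hyperbolic dichotomy for $G$ inside the loxodromic case.
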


By Osin~\cite[Theorem 1.1]{Osin06}, each of the groups $H_{\lambda}$ in Theorem~\ref{introthm:existence-subexponential} is finitely generated so that the assumption on subexponential growth indeed makes sense.
Relatively hyperbolic groups $G$ as in Theorem~\ref{introthm:existence-subexponential} include many naturally occuring examples of groups.
A particularly interesting such class is given by limit groups, which were introduced by Zela in his solution of the Tarski problems~\cite{Sela01} and naturally generalize the class of free groups.
By work of Dahmani~\cite{Dahmani03} and Alibegovic~\cite{Alibegovic05}, Limit groups are known to be relatively hyperbolic with respect to a system of representatives for the conjugacy classes of its maximal abelian non-cyclic subgroups.
As a consequence, we obtain the following generalization of Olshanskii's existence result.

\begin{corollary}\label{introcor:existence-limit-groups}
Let $G$ be a limit group.
Then the relative exponential growth rate of every subgroup $H \leq G$ exists with respect to every finite generating set of $G$.
\end{corollary}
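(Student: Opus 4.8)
The plan is to deduce this directly from Theorem~\ref{introthm:existence-subexponential}, so that the only work is to verify its hypotheses for a limit group $G$. First I would recall that limit groups are finitely generated (in fact finitely presented), so that finite generating sets of $G$ and the associated relative growth functions $\beta^X_H$ make sense for every subgroup $H \leq G$. Next I would invoke the theorems of Dahmani~\cite{Dahmani03} and Alibegovic~\cite{Alibegovic05}: a limit group $G$ is hyperbolic relative to the finite collection $H_\Lambda = \Set{H_\lambda}{\lambda \in \Lambda}$ consisting of one representative from each conjugacy class of maximal non-cyclic abelian subgroups of $G$.

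The one remaining point is that each peripheral subgroup $H_\lambda$ has subexponential growth. For this I would appeal to the structure theory of limit groups, which guarantees that every abelian subgroup of a limit group is finitely generated (indeed free abelian of finite rank). A finitely generated abelian group has polynomial growth, hence certainly subexponential growth, so each $H_\lambda$ satisfies the growth hypothesis of Theorem~\ref{introthm:existence-subexponential}.

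Having checked all hypotheses, I would then simply apply Theorem~\ref{introthm:existence-subexponential} to $G$ equipped with the peripheral structure $H_\Lambda$ above. This yields that the relative exponential growth rate of every subgroup $H \leq G$ exists with respect to every finite generating set of $G$, which is exactly the assertion of the corollary.

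I do not anticipate a genuine obstacle: the statement is a formal consequence of Theorem~\ref{introthm:existence-subexponential} together with two standard facts about limit groups, namely their relative hyperbolicity over their maximal non-cyclic abelian subgroups and the finite generation of their abelian subgroups. The only subtlety worth flagging in the write-up is to cite the finite generation of the peripheral subgroups explicitly, since Theorem~\ref{introthm:existence-subexponential} requires the phrase ``subexponential growth'' to be meaningful for each $H_\lambda$; as noted above this is automatic in the present setting.
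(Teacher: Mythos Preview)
Your proposal is correct and follows essentially the same route as the paper: the corollary is deduced immediately from Theorem~\ref{introthm:existence-subexponential} by invoking Dahmani~\cite{Dahmani03} and Alibegovic~\cite{Alibegovic05} for the relative hyperbolicity of limit groups over their maximal non-cyclic abelian subgroups, and noting that these peripherals, being finitely generated abelian, have (polynomial, hence) subexponential growth. The only minor difference is that the paper obtains finite generation of the $H_\lambda$ from Osin's Theorem~\ref{thm:finiteness} rather than from the structure theory of limit groups, but either justification works.
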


\textbf{\\Acknowledgments.} I would like to thank Jason Manning for a helpful conversation regarding an alternative way of proving Corollary~\ref{introcor:main}.

\section{Preliminaries}\label{sec:rel-fin-pres-grps}

In this section we introduce some definitions and properties that will be relevant for our study of relatively finitely presented groups.
More information about these groups can be found in~\cite{Osin06}.

\subsection{Relative presentations}
Let us fix a group $G$ and a collection $H_{\Lambda} = \Set{H_{\lambda}}{\lambda \in \Lambda}$ of so-called \emph{peripheral subgroups} of $G$.
Let $X \subseteq G$ be a symmetric subset such that $G$ is generated by $\bigcup \limits_{\lambda \in \Lambda} H_{\lambda} \cup X$.
Such $X$ will be referred to as a \emph{relative generating} of $G$ with respect to $H_{\Lambda}$.
Note that this gives us a canonical epimorphism
\[
\varepsilon \colon F \defeq (\ast_{\lambda \in \Lambda} \widetilde{H}_{\lambda}) \ast F(X) \rightarrow G,
\]
where the groups $\widetilde{H}_{\lambda}$ are pairwise disjoint isomorphic copies of $H_{\lambda}$ and $F(X)$ denotes the free group over $X$.
Let us also assume that $\widetilde{H}_{\lambda} \cap X = \emptyset$ for every $\lambda \in \Lambda$.
Let $N$ denote the kernel of $\varepsilon$ and let $\mathcal{R} \subseteq N$ be a subset whose normal closure in $F$ coincides with $N$.
For each $\lambda \in \Lambda$ let $\mathcal{S}_{\lambda}$ be the set of words over $\widetilde{H}_{\lambda} \setminus \{1\}$ that represents the identity in $G$.

\begin{definition}\label{def:rel-fin-praes}
With the notation above, we say that a \emph{relative presentation of $G$ with respect to $H_{\Lambda}$} is a presentation of the form
\begin{equation}\label{eq:rel-praes-intro-long-2}
\langle X, \mathcal{H}\ \vert \ S = 1, S \in \bigcup \limits_{\lambda \in \Lambda} \mathcal{S}_{\lambda},\ R = 1, R \in \mathcal{R} \rangle,
\end{equation}
where $\mathcal{H} \defeq \bigcup \limits_{\lambda \in \Lambda} (\widetilde{H}_{\lambda} \setminus \{1\})$.
The relative presentation~\eqref{eq:rel-praes-intro-long-2} is called finite if $X$ and $\mathcal{R}$ are finite.
In this case $G$ is said to be \emph{relatively finitely presented with respect to $H_{\Lambda}$}.
\end{definition}

The following result will be crucial for us.
It can be found in~\cite[Theorem~1.1]{Osin06}.

\begin{theorem}\label{thm:finiteness}
Let $G$ be a finitely generated group and let $H_{\Lambda} = \Set{H_{\lambda}}{\lambda \in \Lambda}$ be a collection of its subgroups.
Suppose that $G$ is finitely presented with respect to $H_{\Lambda}$.
Then the following conditions hold.
\begin{enumerate}
\item The collection $H_{\Lambda}$ is finite, i.e.\ $\abs{\Lambda} < \infty$.
\item Each subgroup $H_{\lambda}$ is finitely generated.
\end{enumerate}
\end{theorem}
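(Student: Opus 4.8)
The plan is to work inside a van Kampen diagram over the relative presentation $\langle X,\mathcal{H}\mid\mathcal{S},\mathcal{R}\rangle$, exploiting that an $\mathcal{S}_{\mu}$-cell and an $\mathcal{S}_{\nu}$-cell with $\mu\neq\nu$ can never share an edge, so that an $\mathcal{S}_{\lambda}$-subdiagram interacts with the rest of the diagram only along the finitely many letters that occur in the finite set $\mathcal{R}$. First I would isolate the relevant finite data. Since $G$ is finitely generated and $\bigcup_{\lambda}H_{\lambda}\cup X$ generates $G$, a finite generating set of $G$ can be written as words over $X\cup\mathcal{H}$, and these finitely many words involve only finitely many indices $\lambda$ — call this set $\Lambda_{0}$ — and, for each $\lambda\in\Lambda_{0}$, only finitely many letters of $\widetilde H_{\lambda}\setminus\{1\}$, which I collect in a finite set $A_{\lambda}$ (with $A_{\lambda}=\emptyset$ for $\lambda\notin\Lambda_{0}$); thus $G=\langle X\cup\bigcup_{\lambda}A_{\lambda}\rangle$. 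Likewise only finitely many indices $\Lambda_{1}$ occur in $\mathcal{R}$, and for each of them only finitely many letters $B_{\lambda}\subseteq\widetilde H_{\lambda}\setminus\{1\}$ (the $\widetilde H_{\lambda}$-syllables of the relators in $\mathcal{R}$); set $B_{\lambda}=\emptyset$ otherwise and $C_{\lambda}:=A_{\lambda}\cup B_{\lambda}$. The claim to prove is that $H_{\lambda}=\varepsilon(\langle C_{\lambda}\rangle)$ for every $\lambda$: this is assertion (2) directly, and since $C_{\lambda}=\emptyset$ whenever $\lambda\notin\Lambda_{0}\cup\Lambda_{1}$ it shows that all but finitely many $H_{\lambda}$ are trivial, which after discarding those yields assertion (1).

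To prove the claim, fix $\lambda$ and $1\neq h\in H_{\lambda}$ with copy $\widetilde h\in\widetilde H_{\lambda}\setminus\{1\}$, and pick a word $u$ over $X\cup\bigcup_{\mu}A_{\mu}$ with $\varepsilon(u)=h$. Then $w:=\widetilde h\,u^{-1}$ is a word over $X\cup\mathcal{H}$ representing $1$ in $G$, so it bounds a van Kampen diagram $\Delta$ over $\langle X,\mathcal{H}\mid\mathcal{S},\mathcal{R}\rangle$; by the standard diagram reductions I may assume $\Delta$ is reduced and that each maximal subcomplex of $\mathcal{S}_{\mu}$-cells (for a fixed $\mu$) is a disc subdiagram — a hole of such a subcomplex is bounded by a word over $\widetilde H_{\mu}\setminus\{1\}$ representing $1$ in $\widetilde H_{\mu}$, hence lies in $\mathcal{S}_{\mu}$ and can be refilled by $\mathcal{S}_{\mu}$-cells. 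Consider the edge of $\partial\Delta$ labelled $\widetilde h$; it borders a single $2$-cell, and if that cell is an $\mathcal{R}$-cell then $\widetilde h\in B_{\lambda}$ and we are done, so assume it is an $\mathcal{S}_{\lambda}$-cell and let $\Pi$ be the disc subdiagram of $\mathcal{S}_{\lambda}$-cells containing it. Each boundary edge of $\Pi$ is either an edge of $\partial\Delta$, so labelled by $\widetilde h$ or by a $\widetilde H_{\lambda}$-letter of $u$, i.e.\ by an element of $\{\widetilde h\}\cup A_{\lambda}$, or an edge shared with a $2$-cell outside $\Pi$; the latter cannot be an $\mathcal{S}_{\mu}$-cell with $\mu\neq\lambda$ (the shared label would lie in $(\widetilde H_{\lambda}\setminus\{1\})\cap(\widetilde H_{\mu}\setminus\{1\})=\emptyset$) nor an $\mathcal{S}_{\lambda}$-cell (it would belong to $\Pi$), hence it is an $\mathcal{R}$-cell and the shared label lies in $B_{\lambda}$. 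So the boundary word of the disc $\Pi$ is a word over $(C_{\lambda}\cup\{\widetilde h\})^{\pm1}$ in which $\widetilde h$ occurs exactly once — any further occurrence, on $\partial\Delta$ or shared with an $\mathcal{R}$-cell, would place $\widetilde h$ in $C_{\lambda}$ anyway — and it represents $1$ in $\widetilde H_{\lambda}$, being a product of conjugates of $\mathcal{S}_{\lambda}$-relators. Writing this relation as $P\,\widetilde h^{\pm1}Q=1$ with $P,Q$ words over $C_{\lambda}^{\pm1}$ and solving for $\widetilde h$ gives $\widetilde h\in\langle C_{\lambda}\rangle$, as claimed.

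The one genuinely delicate ingredient is the diagram surgery: that the $\mathcal{S}_{\mu}$-subcomplexes may be taken simply connected and that cells of $\mathcal{S}_{\mu}$ and $\mathcal{S}_{\nu}$ with $\mu\neq\nu$ never meet along an edge, so that an $\mathcal{S}_{\lambda}$-component can only leak into the rest of $\Delta$ through $\mathcal{R}$-cells — which is exactly what makes the finitely many letters appearing in $\mathcal{R}$ suffice. I expect this to be the main obstacle. By contrast a purely algebraic substitute, such as applying the canonical retraction $F\to\widetilde H_{\lambda}$ to an equation $\widetilde h\,u^{-1}=\prod_{i}f_{i}^{-1}R_{i}^{\pm1}f_{i}$ in $F$, only yields that $H_{\lambda}$ is \emph{normally} finitely generated, since the conjugators $f_{i}$ are not controlled; the planar structure of $\Delta$ is really needed.
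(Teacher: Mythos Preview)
The paper does not give its own proof of this statement: it simply records the result and cites Osin's memoir \cite[Theorem~1.1]{Osin06}. So there is nothing to compare against in the paper itself; your proposal has to be judged on its own merits, and against Osin's original argument.

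Your argument is correct and is, in essence, the proof Osin gives. The key geometric insight --- that a maximal connected $\mathcal{S}_{\lambda}$-subdiagram can only meet the rest of the diagram along edges of $\mathcal{R}$-cells, hence along the finitely many $\widetilde H_{\lambda}$-letters in $B_{\lambda}$ --- is exactly the point. Your justification for filling holes is also right: the boundary of any hole of an $\mathcal{S}_{\lambda}$-region is a word over $\widetilde H_{\lambda}\setminus\{1\}$ that represents $1$ in $G$, and since $\varepsilon|_{\widetilde H_{\lambda}}\colon\widetilde H_{\lambda}\to H_{\lambda}$ is an isomorphism, it already represents $1$ in $\widetilde H_{\lambda}$ and so lies in $\mathcal{S}_{\lambda}$. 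Two small points worth tightening: first, the boundary edge labelled $\widetilde h$ need not border any $2$-cell at all (it could sit on a spike of the $1$-skeleton), but then it is immediately cancelled by a letter of $u^{-1}$, forcing $\widetilde h\in A_{\lambda}$; second, your conclusion for (1) is literally that all but finitely many $H_{\lambda}$ are trivial, which matches Osin's statement only under the (standard) convention that trivial peripheral subgroups are discarded.

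Your closing remark is well taken: the naive algebraic approach of projecting the free-group identity onto $\widetilde H_{\lambda}$ loses control of the conjugators and gives only normal generation; the planarity of the van Kampen diagram is what forces the $\mathcal{S}_{\lambda}$-region to have a single boundary circle over $C_{\lambda}\cup\{\widetilde h\}$.
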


\subsection{Relative Dehn functions}
Let $G$ be a relatively finitely presented group with a finite relative presentation as in Definition~\ref{def:rel-fin-praes}.
For each $\ell \in \N$, let $N_{\ell}$ denote the set of words of length at most $\ell$ over $X \cup \mathcal{H}$ that represent the identity in $G$.
Given $w \in N_{\ell}$, let $\vol(w) \in \N$ be minimal with the property that there is an expression of the form
\begin{equation}\label{eq:prod-conjugates-2}
w =_F \prod \limits_{i=1}^{\vol(w)} f_i^{-1} R_i^{\pm 1} f_i,
\end{equation}
where the equality is taken in $F$ and $f_i \in F$, $R_i \in \mathcal{R}$ for every $1 \leq i \leq \vol(w)$.

\begin{definition}
The \emph{relative Dehn function} for the finite relative presentation~\eqref{eq:rel-praes-intro-long-2} of $G$ is defined by
\[
\delta_{G,H_{\Lambda}}^{rel} \colon \N \rightarrow \N \cup \{\infty\},\ \ell \mapsto \sup \Set{\vol(w)}{w \in N_{\ell}}.
\]
We say that $\delta_{G,H_{\Lambda}}^{rel}$ is well-defined if $\delta_{G,H_{\Lambda}}^{rel}(\ell) < \infty$ for every $\ell \in \N$.
\end{definition}

An important class of relatively finitely presented groups with a well-defined Dehn function consists of relatively hyperbolic groups, which can be defined in terms of the relative Dehn function.

\begin{definition}\label{def:rel-hyp}
A relatively finitely presented group $G$ with a relative presentation~\eqref{eq:rel-praes-intro-long-2} is called \emph{relatively hyperbolic with respect to $H_{\Lambda}$} if there is some $C > 0$ such the relative Dehn function satisfies $\delta_{G,H_{\Lambda}}^{rel}(\ell) \leq C \ell$ for every $\ell \in \N$.
\end{definition}

Of course the relative Dehn function $\delta_{G,H_{\Lambda}}^{rel}$ depends on the finite relative presentation~\eqref{eq:rel-praes-intro-long-2}, and not just on $H_{\Lambda}$.
But as for ordinary, i.e.\ non-relative, Dehn functions of finitely presented groups, different finite relative presentations lead to asymptotically equivalent relative Dehn functions, see~\cite[Theorem 2.34]{Osin06}.
In particular, the property of $\delta_{G,H_{\Lambda}}^{rel}$ of being well-defined or bounded above by a linear function does not depend on the choice of a finite relative presentation.

\subsection{Geometry of relative Cayley graph}
Let us again consider a relatively finitely presented group $G$ with a finite relative presentation as in Definition~\ref{def:rel-fin-praes}.
The Cayley graph of $G$ with respect to $X \cup \mathcal{H}$ is called \emph{the relative Cayley graph} of $G$ and will be denoted by $\Gamma(G,X \cup \mathcal{H})$. 
In the following, will study the local geometry of $\Gamma(G,X \cup \mathcal{H})$.
In order to do so, let us fix some terminology.
Given an edge $e$ of $\Gamma(G,X \cup \mathcal{H})$, we write $\partial_{0}(e)$ to denote the initial vertex of $e$ and $\partial_{1}(e)$ to denote the terminal vertex of $e$.
A sequence $p = (e_1,\ldots,e_n)$ of edges in $\Gamma(G,X \cup \mathcal{H})$ is called a \emph{path} if $\partial_1(e_i) = \partial_0(e_{i+1})$ for $1 \leq i < n$.
If moreover $\partial_0(e_1) = \partial_1(e_n)$, then $p$ is said to be \emph{cyclic}.
The label of a path $p$ will be denoted by $\Lab(p)$.
Sometimes it is useful to forget about the initial vertex of a cyclic path $p = (e_1,\ldots,e_n)$.
To make this precise, we define the \emph{loop} associated to $p$ as the set $[p]$ of all paths of the form $(e_i,\ldots,e_n,e_1,\ldots,e_{i+1})$ for $1 \leq i \leq n$.
A \emph{subpath of a loop} $[p]$ is a subpath of some representative $p' \in [p]$.
The algebraic counterpart of a loop is the set $[w]$ of all cyclic conjugates of a word $w$ over $X \cup \mathcal{H}$, which will be referred to as a \emph{cyclic word}.
Accordingly, the label of a loop $[p]$ is defined as $\Lab([p]) \defeq [\Lab(p)]$.
Up to minor notational differences, the following definitions can be found in~\cite{Osin06}.

\begin{definition}\label{def:subwords-syllables}
Let $w$ be a word over $X \cup \mathcal{H}$.
A subword $v$ of $w$ is a \emph{$\lambda$-subword} if it consists of letters of $\widetilde{H}_{\lambda}$.
If a $\lambda$-subword $v$ of $w$ is not properly contained in any other $\lambda$-subword of $w$, then $v$ is called a \emph{$\lambda$-syllable} of $w$.
Similarly, we say that a word $v$ over $X \cup \mathcal{H}$ is a $\lambda$-subword of a cyclic word $[w]$ if it is a $\lambda$-subword of some cyclic conjugate of $w$.
If a $\lambda$-subword $v$ of $[w]$ is not properly contained in any other $\lambda$-subword of $[w]$, then $v$ is called a \emph{$\lambda$-syllable} of $[w]$.
\end{definition}

Let us now translate Definition~\ref{def:subwords-syllables} into conditions for paths in $\Gamma(G,X \cup \mathcal{H})$.

\begin{definition}\label{def:subpathes-syllables}
Let $q$ be a path in $\Gamma(G,X \cup \mathcal{H})$.
A subpath $p$ of $q$ is a \emph{$\lambda$-subpath} if $\Lab(p)$ is a $\lambda$-subword of $\Lab(q)$.
A $\lambda$-subpath $p$ of $q$ is called a \emph{$\lambda$-component} of $q$ if $\Lab(p)$ is a $\lambda$-syllable of $\Lab(q)$.
Suppose now that $q$ is cyclic, and consider the loop $[q]$ associated to $q$.
We say that a subpath $p$ of $[q]$ is a $\lambda$-subpath of $[q]$ if $\Lab(p)$ is a $\lambda$-subword of $\Lab([q])$.
If moreover $\Lab(p)$ is a $\lambda$-syllable of $\Lab([q])$, then $p$ is called a \emph{$\lambda$-component} of $[q]$.
\end{definition}

\begin{definition}\label{def:connected-components}
Let $p_1$ and $p_2$ be $\lambda$-components of a path $p$, respectively a loop $[q]$, in $\Gamma(G,X \cup \mathcal{H})$.
We say that $p_1$ and $p_2$ are \emph{connected}, if there is a path $c$ in $\Gamma(G,X \cup \mathcal{H})$ that connects a vertex of $p_1$ with a vertex of $p_2$ and $\Lab(c)$ consists of letters of $\widetilde{H}_{\lambda}$.
We say that $p_1$ is \emph{isolated} in $p$, respectively $[q]$, if there are no further $\lambda$-components of $p$, respectively $[q]$, that are connected to $p_1$.
\end{definition}

Let us now translate the notion of an isolated component of a path (loop) in a corresponding notion for syllables in (cyclic) words.

\begin{definition}
Let $w$ be a word over $X \cup \mathcal{H}$ and let $p$ be any path in $\Gamma(G,X \cup \mathcal{H})$ with $\Lab(p) = w$.
We say that two $\lambda$-syllables $v_1,v_2$ of $w$ are \emph{connected}, respectively \emph{isolated}, if the corresponding $\lambda$-components $p_1,p_2$ of $p$ are connected, respectively isolated.
If $w$ represents the identity in $G$ and $v_1,v_2$ are $\lambda$-syllables of the cyclic word $[w]$, then $v_1,v_2$ are \emph{connected}, respectively \emph{isolated}, if the corresponding $\lambda$-components $p_1,p_2$ of the loop $[p]$ are connected, respectively isolated.
\end{definition}

The following lemma is a direct consequence of Lemma~\cite[Lemma 2.27]{Osin06}.
It will help us to study the local structure of $\Gamma(G,X \cup \mathcal{H})$ and often lets us switch between the word metrics $d_X$ and $d_{X \cup \mathcal{H}}$.

\begin{lemma}\label{lem:finite-cycles}
Let $G$ be a finitely generated group with a finite generating set $X$.
Suppose that $G$ is relatively finitely presented with respect to a collection $H_{\Lambda} = \Set{H_{\lambda}}{\lambda \in \Lambda}$ of its subgroups and that the relative Dehn function $\delta_{G,H_{\Lambda}}^{rel}$ is well-defined.
Then for every $n \in \N$ there is a finite subset $\Omega_n \subseteq G$ with the property that for every cyclic path $q$ in $\Gamma(G,X \cup \mathcal{H})$ of length at most $n$ and every isolated component $p$ of the loop $[q]$, the label $\Lab(p)$ represents an element in $\Omega_n$.
\end{lemma}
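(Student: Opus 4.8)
The plan is to derive this directly from Osin's \cite[Lemma 2.27]{Osin06}, which (in Osin's formulation) asserts the existence of a finite subset of $G$ collecting the labels of isolated components of cyclic words of bounded length that represent the identity, \emph{provided} the relative Dehn function is well-defined. The only gap between that statement and ours is that here $q$ is an arbitrary cyclic path in $\Gamma(G,X\cup\mathcal H)$ of length at most $n$, not necessarily one whose label represents the identity. First I would observe that a cyclic path $q$ of length $n$ starting at the vertex $1$ can be closed up: let $w=\Lab(q)$, a word of length at most $n$ over $X\cup\mathcal H$ representing some element $g=\varepsilon(w)\in G$. Since $X$ is a finite generating set of $G$ and $g$ is represented by a word of relative length at most $n$, a routine induction using the fact that the peripheral subgroups are finitely generated (Theorem \ref{thm:finiteness}) shows that $g$ lies in the ball $B^X_G(f(n))$ for some function $f$ depending only on $n$ — actually this is exactly the kind of statement one proves via Lemma \ref{lem:finite-cycles} itself, so instead I would argue more carefully: each $\mathcal H$-letter appearing in $w$ is a single edge of $\Gamma(G,X\cup\mathcal H)$, and there are at most $n$ of them, so $g$ is a product of at most $n$ elements each either in $X$ or in some $H_\lambda$; in particular $g$ lies in a \emph{fixed} finite-from-the-start... no. The clean fix is: write $w' = w u^{-1}$ where $u$ is a shortest word over $X$ representing $g$; then $w'$ represents the identity and has length at most $n + |g|_X$.

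The subtlety is that $|g|_X$ need not be bounded in terms of $n$ a priori — a single $\mathcal H$-edge can correspond to a peripheral element of arbitrarily large $X$-length. So a direct reduction to Osin's lemma for identity-representing words does not immediately work. Instead I would apply Osin's \cite[Lemma 2.27]{Osin06} in the following form: there is a constant $K=K(n)$ such that for any cyclic word of relative length at most $n$ representing the identity, every isolated $\lambda$-syllable labels an element of $G$ whose $X$-length is at most $K$. Now, given our arbitrary cyclic path $q$ of length at most $n$ and an isolated $\lambda$-component $p$ of $[q]$, I would form the identity-representing cyclic word by concatenating two copies of the loop: $[q q^{-1}]$ has relative length at most $2n$, represents the identity, and contains $p$ as a component (it remains isolated, since any connecting path for $p$ inside $[qq^{-1}]$ either stays within one copy of $[q]$, contradicting isolation there, or would force a $\lambda$-subword relation that already contradicts isolation in $[q]$ — this point needs to be checked but follows from the definitions of connected/isolated). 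Applying Osin's lemma to $[qq^{-1}]$ with the bound $2n$ then gives $\Lab(p)\in B^X_G(K(2n))$, and we set $\Omega_n \defeq B^X_G(K(2n))$, which is finite because $X$ is finite.

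The main obstacle I anticipate is the bookkeeping in the previous paragraph: verifying that doubling the loop does not create new $\lambda$-components connected to $p$ (so that $p$ stays isolated in $[qq^{-1}]$), and confirming that Osin's \cite[Lemma 2.27]{Osin06} is stated — or can be straightforwardly restated — in the quantitative ``bounded $X$-length'' form rather than merely ``finitely many labels in $G$'' (these are equivalent once $X$ is finite, so this is cosmetic). If instead one reads Osin's lemma as already allowing arbitrary cyclic paths with isolated components and a length bound, then the doubling trick is unnecessary and the lemma is essentially a restatement; in either case the content is entirely Osin's and the work here is purely in translating between his conventions and the path/loop language set up in Definitions \ref{def:subpathes-syllables}--\ref{def:connected-components}.
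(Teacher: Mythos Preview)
You have misread the paper's definition of \emph{cyclic path}: by the conventions set up just before Definition~\ref{def:subwords-syllables}, a path $q=(e_1,\ldots,e_n)$ is called cyclic precisely when $\partial_0(e_1)=\partial_1(e_n)$, i.e.\ when it is already closed. Consequently $\Lab(q)$ automatically represents the identity in $G$, and the ``gap'' you spend most of the proposal trying to bridge does not exist. The paper accordingly offers no argument beyond the sentence that the lemma ``is a direct consequence of \cite[Lemma~2.27]{Osin06}''; your final hedge---that the lemma is essentially a restatement of Osin's result in the path/loop language of Definitions~\ref{def:subpathes-syllables}--\ref{def:connected-components}---is exactly right, and the intermediate manoeuvres (closing up with an $X$-geodesic, the doubling trick) are unnecessary.

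It is also worth noting that the doubling trick, had it been needed, would not work: if $q$ were a non-closed path and $p$ a $\lambda$-component of $q$, then in $qq^{-1}$ the component $p$ and its reversed copy $p^{-1}$ in the second half traverse the \emph{same} vertices of $\Gamma(G,X\cup\mathcal H)$, so they lie in a common $H_\lambda$-coset and are connected in the sense of Definition~\ref{def:connected-components}. Thus $p$ would never be isolated in $[qq^{-1}]$, and Osin's lemma could not be applied to it.
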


\section{The alternating growth condition}

In this section we introduce the alternating growth condition, which will play a central role in our proof of Theorem~\ref{introthm:main}.

\subsection{Regular neighbourhoods}
Let us start by defining a condition for paths in graphs that can be thought of as a strong form of having no self-intersections.
%It is motivated by the notion of regular neighbourhoods in topology.

\begin{definition}\label{def:grph-reg-nbh}
Let $\Gamma$ be a graph and let $p$ be a path in $\Gamma$ that consecutively traverses the sequence $v_0,\ldots,v_n$ of vertices in $\Gamma$.
We say that $p$ has a \emph{regular neighbourhood in $\Gamma$} if every two vertices $v_i,v_j$ that can be joined by an edge in $\Gamma$ satisfy $\abs{i-j} \leq 1$.
\end{definition}

\begin{example}\label{ex:geod-implies-reg-nbh}
If $p$ is a geodesic path in a graph $\Gamma$, then $p$ has a regular neighbourhood in $\Gamma$.
\end{example}

\begin{example}\label{ex:cyclic-implies-no-reg-nbh}
If $p$ is a non-trivial cyclic path in a graph $\Gamma$, then $p$ does not have a regular neighbourhood $\Gamma$.
\end{example}

\begin{remark}\label{rem:loc-2-geod}
Note that every path $p$ that has a regular neighbourhood in a graph $\Gamma$ is locally $2$-geodesic, i.e.\ the restriction of $p$ to each subpath of length at most $2$ is geodesic.
\end{remark}

It will be useful for us to translate the concept of regular neighbourhoods to words over some generating set of a group.

\begin{definition}\label{def:word-regular-nbh}
Let $G$ be a group and let $X$ be a generating set of $G$.
A word $w$ over $X$ is called \emph{regular} (with respect to $X$) if some path $p$ in $\Gamma(G,X)$ with $\Lab(p) = w$ has a regular neighbourhood in $\Gamma(G,X)$.
\end{definition}

\begin{remark}\label{rem:char-regular-words}
Let $G$ be a group and let $X$ be a generating set of $G$.
From the definitions if directly follows that a word $w$ over $X$ is regular if and only if every subword $v$ of $w$ of length at least $2$ satisfies $\abs{v}_X \geq 2$, where $\abs{\cdot}_X$ denotes the word metric corresponding to $X$.
\end{remark}

\subsection{Sequences of alternating growth}
We want to study sequences of regular words in the context of finitely generated, relatively finitely presented groups.
Let us therefore fix a finitely generated group $G$, a finite generating set $X$ of $G$, and a collection $H_{\Lambda} = \Set{H_{\lambda}}{\lambda \in \Lambda}$ of peripheral subgroups of $G$.
Suppose that $G$ is relatively finitely presented with respect to $H_{\Lambda}$ and that the relative Dehn function $\delta_{G,H_{\Lambda}}^{rel}$ is well-defined.
As in Section~\ref{sec:rel-fin-pres-grps} we write $\widetilde{H}_{\lambda}$ to denote pairwise disjoint isomorphic copies of $H_{\lambda}$ that also intersect trivially with $X$.
Let us fix some notation in order to avoid ambiguities concerning the length and the evaluation of a word over $X \cup \mathcal{H}$, where as always $\mathcal{H} = \bigcup \limits_{\lambda \in \Lambda} (\widetilde{H}_{\lambda} \setminus \{1\})$.

\begin{notation}\label{not:word-length-vs-word-metric}
Let $w = w_1 \ldots w_{\ell}$ be a word over $X \cup \mathcal{H}$.
We write $\norm{w} = \ell$ for the word length of $w$.
The image of $w$ in $G$ will be denoted by $\overline{w}$.
For any subset $Y \subseteq G$ we write $\abs{\overline{w}}_Y$ for the length of a shortest word over $Y$ that represents $\overline{w}$.
If there is no such word, then we set $\abs{w}_Y = \infty$.
\end{notation}

\begin{definition}\label{def:ag-condition}
A sequence of words $(w^{(n)}_1 \ldots w^{(n)}_{\ell})_{n \in \N}$ of fixed length $\ell \geq 2$ over $X \cup \mathcal{H}$ satisfies the \emph{alternating growth condition} if the following conditions are satisfied:
\begin{enumerate}
\item[I)] If $w^{(n)}_{i} = x$ for some $1 \leq i \leq \ell$, $n \in \N$ and $x \in X$, then $w^{(m)}_{i} = x$ for every $m \in \N$.
In this case we say that $i$ is an \emph{index of type $X$}.
\item[II)] If $w^{(n)}_{i} \in \widetilde{H}_{\lambda}$ for some $1 \leq i \leq \ell$, $n \in \N$ and $\lambda \in \Lambda$, then $w^{(m)}_{i} \in \widetilde{H}_{\lambda}$ for every $m \in \N$.
In this case we say that $i$ is an \emph{index of type $\lambda$}.
\item[III)] The index $1$ is not of type $X$.
\item[IV)] Two consecutive indices are never of the same type.
\item[V)] If $i$ is of type $\lambda$, then $\overline{w}^{(n)}_{i} \notin H_{\mu}$ for every $\mu \in \Lambda \setminus \{ \lambda \}$ and every $n \in \N$.
If $\overline{w}^{(n)}_{i} \in \langle X \rangle$ for some $n \in \N$, then $\abs{\overline{w}^{(n)}_{i}}_X \geq n$.
\item[VI)] Each word $w^{(n)}_1 \ldots w^{(n)}_{\ell}$ is regular with respect to $X \cup \mathcal{H}$.
\end{enumerate}
\end{definition}

The following observation will be used frequently.

\begin{remark}\label{rem:components-are-edges}
Given a regular word $w$ over $X \cup \mathcal{H}$, it directly follows from the definitions that every syllable $v$ in $w$ is isolated and consists of a single edge.
\end{remark}

\subsection{Concatenating sequences of alternating growth}

In what follows we need to construct certain sequences $(w^{(n)}_1 \ldots w^{(n)}_{\ell})_{n \in \N}$ of words over $X \cup \mathcal{H}$ that satisfy the alternating growth condition such that $\ell$ can be chosen arbitrarily large.
In order to do so, we will use the following lemma which allows us to ``concatenate'' two sequences of words that satisfy the alternating growth condition such that the resulting sequence also satisfies the alternating growth condition.

\begin{lemma}\label{lem:construction-step}
With the notation above, suppose that there are two sequences $(v^{(n)}_1 \cdots v^{(n)}_M)_{n \in \N}$ and $(w^{(n)}_1 \cdots w^{(n)}_N)_{n \in \N}$ of words over $X \cup \mathcal{H}$ that satisfy the alternating growth condition.
Let $\lambda \in \Lambda$ be such that $w^{(n)}_1 \in \widetilde{H}_{\lambda}$ for some $n \in \N$.
\begin{enumerate}
\item[\rm{1)}] Suppose that $\overline{v}^{(n)}_{M} \notin H_{\lambda}$ for every $n \in \N$.
Then there is a strictly increasing sequences of natural numbers $(s_n)_{n \in \N}$ such that
%the concatenated sequence
\[
(v^{(s_n)}_{1} \cdots v^{(s_n)}_{M} w^{(s_n)}_{1} \cdots w^{(s_n)}_{N})_{n \in \N}
\]
satisfies the alternating growth condition.
\item[\rm{2)}] Suppose that $\overline{v}^{(n)}_{M} \in H_{\lambda}$ for every $n \in \N$.
Then there are strictly increasing sequences of natural numbers $(s_n)_{n \in \N},(t_n)_{n \in \N}$ such that the sequence
%of words
\[
(v^{(s_n)}_{1} \cdots v^{(s_n)}_{M-1} z^{(n)} w^{(t_n)}_{2} \cdots w^{(t_n)}_{N})_{n \in \N},
\]
where $z^{(n)} \in \widetilde{H}_{\lambda}$ is the element representing $\overline{v^{(s_n)}_{M} w^{(t_n)}_{1}} \in H_{\lambda}$, satisfies the alternating growth condition.
\end{enumerate}
\end{lemma}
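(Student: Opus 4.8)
The plan is to verify the six conditions of Definition~\ref{def:ag-condition} for the concatenated sequence, one at a time, after first arranging that the two given sequences can be ``synchronised'' along a common subsequence. The extraction of subsequences is forced on us because conditions~I) and~II) demand that the letter type at each position be constant in $n$; the given sequences already satisfy this, so the concatenation automatically satisfies~I), II), and IV) \emph{within} each block, and the only place where~IV) could fail is at the seam. In case~1), the last index of the $v$-block is of type $\mu$ for some $\mu$ (or of type $X$), and by hypothesis $w^{(n)}_1 \in \widetilde H_\lambda$ for some $n$, hence by~II) for all $n$, so index $M{+}1$ of the concatenation is of type $\lambda$; we need $\mu \neq \lambda$. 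If the last index of the $v$-block is of type $X$ there is nothing to check. If it is of type $\mu$, then condition~V) applied to the $v$-sequence gives $\overline v^{(n)}_M \in H_\mu$ for at least one $n$ (in fact the type is witnessed), while the hypothesis of case~1) says $\overline v^{(n)}_M \notin H_\lambda$ for all $n$; combined with malnormality-type considerations, or more simply with condition~V) which says $\overline v^{(n)}_M \notin H_{\mu'}$ for $\mu' \neq \mu$, we get $\mu \neq \lambda$. So~IV) holds at the seam, and~III) holds because the first index of the concatenation is the first index of the $v$-block, which is not of type $X$ by~III) for the $v$-sequence.

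For condition~V) on the concatenation in case~1): indices inside the $v$-block inherit~V) from the $v$-sequence, and likewise indices inside the $w$-block inherit it from the $w$-sequence, \emph{provided} we pass to a subsequence $(s_n)$ so that the ``$\abs{\cdot}_X \geq n$'' clause is not weakened. This is the point of the subsequence: if position $i$ in the $w$-block is of type $\lambda$ and $\overline w^{(m)}_i \in \langle X\rangle$, the $w$-sequence only guarantees $\abs{\overline w^{(m)}_i}_X \geq m$; choosing $s_n$ large enough (larger than $n$ and larger than the index needed for the $v$-block's clause) restores the bound with $n$ in place of $m$. Concretely I would set $s_n \defeq \max$ of the bounds demanded by finitely many positions, then pass to a strictly increasing subsequence. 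The first clause of~V), $\overline w^{(n)}_i \notin H_\mu$ for $\mu \neq \lambda(i)$, is subsequence-stable and needs nothing new.

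Condition~VI), regularity of the full word $v^{(s_n)}_1\cdots v^{(s_n)}_M w^{(s_n)}_1 \cdots w^{(s_n)}_N$ with respect to $X \cup \mathcal H$, is the main obstacle and is where Lemma~\ref{lem:finite-cycles} enters. By Remark~\ref{rem:char-regular-words}, regularity means every subword of length $\geq 2$ has $X\cup\mathcal H$-length $\geq 2$; equivalently no two distinct vertices of the path at positions $i < j$ with $j - i \geq 2$ are joined by an edge of $\Gamma(G, X\cup\mathcal H)$. Subwords lying entirely in the $v$-block or entirely in the $w$-block are fine by~VI) for the respective sequences. The danger is a subword straddling the seam, say from position $i \leq M$ in the $v$-block to position $j > M$ in the $w$-block. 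If such a subword $u$ had $\abs{\overline u}_{X\cup\mathcal H} \leq 1$, then $\overline u \in X \cup \mathcal H \cup \{1\}$, and appending the inverse path we obtain a cyclic path of bounded length $\ell = M + N$ or so, to which Lemma~\ref{lem:finite-cycles} assigns a fixed finite set $\Omega := \Omega_{M+N+1}$ capturing the labels of its isolated components. The straddling syllable(s) would then be forced to represent elements of a fixed finite set $\Omega$, contradicting the unbounded-growth clause of~V) — which we have just arranged to read $\abs{\overline{(\cdot)}_i}_X \geq n$ for the $\langle X\rangle$-valued syllables — once $n$ exceeds $\max_{g \in \Omega}\abs{g}_X$. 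This is exactly why another round of subsequence extraction (shrinking $(s_n)$ further so that $n > \max_{g\in\Omega}\abs g_X$) is needed, and it closes case~1).

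For case~2), the construction is the same except that the two middle letters $v^{(s_n)}_M$ and $w^{(t_n)}_1$, both now forced to lie in $H_\lambda$, get fused into a single letter $z^{(n)} \in \widetilde H_\lambda$ representing their product; here we extract \emph{two} subsequences $(s_n),(t_n)$, one for each side, and the seam now sits at a single $\lambda$-index $M$, whose neighbours are index $M-1$ (the penultimate $v$-index, not of type $\lambda$ by~IV) for the $v$-sequence) and index $M+1$ (the second $w$-index, of type $\neq \lambda$ by~IV) for the $w$-sequence, after noting $w^{(n)}_1$ is of type $\lambda$ by hypothesis), so~IV) holds. One must check that $\overline{z^{(n)}} = \overline{v^{(s_n)}_M w^{(t_n)}_1}$ is not trivial and not in any $H_\mu$ with $\mu\neq\lambda$ — triviality would let us shorten the word and contradict regularity or the $\langle X\rangle$-growth clause for large $n$, and $\overline{z^{(n)}} \in H_\mu$ is impossible since $z^{(n)} \in \widetilde H_\lambda$ maps into $H_\lambda$, with $H_\lambda \cap H_\mu$ controlled again by Lemma~\ref{lem:finite-cycles} (a short cyclic path) so that membership would force $\overline{z^{(n)}}$ into a fixed finite set, contradicting $\abs{\overline{z^{(n)}}}_X \geq n$ once $n$ is large enough. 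The regularity check~VI) at the new seam is handled exactly as in case~1) via $\Omega_{M+N+1}$ and a final subsequence extraction. I expect the bookkeeping of which finitely many positions impose which lower bound on $s_n$ (and $t_n$), and the careful invocation of Lemma~\ref{lem:finite-cycles} to rule out seam-straddling short subwords, to be the only genuinely delicate parts; everything else is a routine propagation of the six conditions across the concatenation.
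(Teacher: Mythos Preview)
Your overall strategy matches the paper's: conditions I)--V) are essentially immediate, and the substance lies in VI), handled by producing a short cyclic word at the seam and invoking Lemma~\ref{lem:finite-cycles} to force the seam letter into a fixed finite set, contradicting the growth clause of V).

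The one genuine gap is the step you label ``careful invocation of Lemma~\ref{lem:finite-cycles}''. Lemma~\ref{lem:finite-cycles} only constrains \emph{isolated} components of $[q_n]$, so you must show that the seam letter $w^{(s_n)}_1$ (in case~1) or $z^{(n)}$ (in case~2) is an isolated $\lambda$-syllable of $[q_n]$. This is not automatic: the appended generator $u_n$ could a priori be a $\lambda$-letter connected to $w^{(s_n)}_1$, or some $v^{(s_n)}_i$ could be. The paper rules out each possibility by a short case analysis, and this is precisely where the hypothesis $\overline{v}^{(n)}_M \notin H_\lambda$ does its real work in case~1 (not just in checking IV at the seam): any connection of $w^{(s_n)}_1$ to another $\lambda$-syllable forces some tail $\overline{v^{(s_n)}_i\cdots v^{(s_n)}_M}$ into $H_\lambda$, and regularity of $v^{(s_n)}$ then pins $i=M$, contradicting the hypothesis. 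In case~2 the paper first establishes regularity of the two half-words $v^{(s_n)}_1\cdots v^{(s_n)}_{M-1}z^{(n)}$ and $z^{(n)}w^{(t_n)}_2\cdots w^{(t_n)}_N$ separately (again via isolation of $z^{(n)}$ in suitable short loops), and only then treats the full concatenation; the isolation check for $z^{(n)}$ in the final step has three subcases depending on whether the offending straddling subword represents the identity, a non-$H_\lambda$ element, or a nontrivial $H_\lambda$ element. Your proposal does not touch any of this.

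A smaller omission: in case~2 you assert $\abs{\overline{z^{(n)}}}_X \geq n$ without saying how to arrange it. This is exactly why two independent subsequences $(s_n),(t_n)$ are needed: fix $s_n$ first, then choose $t_n$ large enough that $\abs{\overline{w^{(t_n)}_1}}_X$ dominates $\abs{\overline{v^{(s_n)}_M}}_X + n$.
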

\begin{proof}
Let us first prove $1)$.
Suppose that there is no such sequence $(s_i)_{i \in \N}$.
Then there are infinitely many $n \in \N$ such that $v^{(n)}_{1} \cdots v^{(n)}_{M} w^{(n)}_{1} \cdots w^{(n)}_{N}$ does not satisfy some of the conditions of Definition~\ref{def:ag-condition}.
Since I) - V) are clearly satisfied, it follows that $v^{(n)}_{1} \cdots v^{(n)}_{M} w^{(n)}_{1} \cdots w^{(n)}_{N}$ is not regular (with respect to $X \cup \mathcal{H}$) for infinitely many $n \in \N$.
By restriction to a subsequence if necessary, we can assume that none of the words $v^{(n)}_{1} \cdots v^{(n)}_{M} w^{(n)}_{1} \cdots w^{(n)}_{N}$ is regular.
Since $\overline{v}^{(n)}_{M} \notin H_{\lambda}$ for every $n \in \N$, none of the subwords $v^{(n)}_{M} w^{(n)}_1$ represents the trivial element in $G$.
Together with the assumption that $v^{(n)}_{1} \cdots v^{(n)}_{M}$ and $w^{(n)}_{1} \cdots w^{(n)}_{N}$ are regular, it follows that there is a maximal index $a_n$ such that 
%there is some minimally chosen index $b_n$ with
\begin{equation}\label{eq:cycles}
\abs{v^{(n)}_{a_n} \cdots v^{(n)}_{M} w^{(n)}_{1} \cdots w^{(n)}_{b_n}}_{X \cup \mathcal{H}} = 1
\end{equation}
for some index $b_n$.
Suppose that each $b_n$ is chosen to be minimal with respect to $a_n$.
Then there are generators $u_n \in X \cup \mathcal{H}$ such that
\[
q_n = v^{(n)}_{a_n} \cdots v^{(n)}_{M} w^{(n)}_{1} \cdots w^{(n)}_{b_n} u_n
\]
represents the identity in $G$ for every $n \in \N$.
We want to argue that $w^{(n)}_{1}$ is an isolated $\lambda$-syllable in the cyclic word $[q_n]$.
Suppose that this is not the case.
Then there are $3$ cases to consider.

\emph{Case $1:$}
$\overline{v_i^{(n)} \cdots v_M^{(n)}w_1^{(n)}} \in H_{\lambda}$ for some $a_n \leq i \leq N$.
Then $\overline{v^{(n)}_i \cdots v^{(n)}_M} \in H_{\lambda}$ and since $v^{(n)}_{1} \cdots v^{(n)}_{M}$ is regular, we obtain $i = M$.
Thus $\overline{v}^{(n)}_M \in H_{\lambda}$, in contrast to our assumption that $\overline{v}^{(n)}_M \notin H_{\lambda}$.

\emph{Case $2:$}
$\overline{w_1^{(n)} \cdots w_{i}^{(n)}} \in H_{\lambda}$ for some $2 \leq i \leq b_n$.
This is a contradiction since $w^{(n)}_{1} \cdots w^{(n)}_{N}$ is regular.

\emph{Case $3:$}
$\overline{w_1^{(n)} \cdots w_{b_n}^{(n)}u_n} \in H_{\lambda}$.
In this case we also have $\overline{v^{(n)}_{a_n} \cdots v^{(n)}_{M}} \in H_{\lambda}$.
Using again the assumption that $v^{(n)}_{1} \cdots v^{(n)}_{M}$ is regular, it follows that $a_n = M$ and $\overline{v}^{(n)}_{M} \in H_{\lambda}$, which contradicts our assumption that $\overline{v}^{(n)}_M \notin H_{\lambda}$.

Thus we see that $w^{(n)}_{1}$ is indeed an isolated $\lambda$-syllable in $[q_n]$.
Moreover we have $\norm{q_n} \leq M+N+1$ for every $n \in \N$.
From Lemma~\ref{lem:finite-cycles} it therefore follows that $\Set{\overline{w}^{(n)}_{1}}{n \in \N}$ is a finite subset of $G$.
On the other hand, the alternating growth condition ensures that $\abs{w^{(n)}_{1}}_X \geq n$ for every $n \in \N$.
This finally gives us the contradiction that arose from our assumption that there is no sequence $(s_i)_{i \in \N}$ as in the first case of the lemma.

\medskip

Let us now prove case $2)$ of the Lemma.
From the alternating growth condition we know that $\abs{w^{(n)}_{1}}_X \geq n$ for every $n \in \N$.
Thus we can choose strictly increasing sequences of natural numbers $(s_n)_{n \in \N},(t_n)_{n \in \N}$ such that $\abs{\overline{v^{(s_n)}_{M} w^{(t_n)}_{1}}}_{X} \geq n$ for every $n \in \N$.
Note that the conditions I) - V) of Definition~\ref{def:ag-condition} are clearly satisfied for 
\[
(v^{(s_n)}_{1} \cdots v^{(s_n)}_{M-1} z^{(n)} w^{(t_n)}_{2} \cdots w^{(t_n)}_{N})_{n \in \N},
\]
where $z^{(n)} \in \widetilde{H}_{\lambda}$ is the element representing $\overline{v^{(s_n)}_{M} w^{(t_n)}_{1}}$.
In order to prove the lemma it therefore suffices to show that $v^{(s_n)}_{1} \cdots v^{(s_n)}_{M-1} z^{(n)} w^{(t_n)}_{2} \cdots w^{(t_n)}_{N}$ is regular for all but finitely many $n \in \N$.
To see this, let us first consider the subwords $v^{(s_n)}_{1} \cdots v^{(s_n)}_{M-1}z^{(n)}$ and $z^{(n)}w^{(t_n)}_{2} \cdots w^{(t_n)}_{N}$.
Suppose that there is some $1 \leq i \leq M-1$ with $\abs{v^{(s_n)}_{i} \cdots v^{(s_n)}_{M-1}z^{(n)}}_{X \cup \mathcal{H}} \leq 1$.
Then there are two cases two consider.

\emph{Case $1:$} $\overline{v^{(s_n)}_{i} \cdots v^{(s_n)}_{M-1}z^{(n)}} \in H_{\lambda}$.
Then we also have $\overline{v^{(s_n)}_{i} \cdots v^{(s_n)}_{M-1}} \in H_{\lambda}$, and since $v^{(s_n)}_{1} \cdots v^{(s_n)}_{M}$ is regular, it follows that $M-1 = 1$.
But then $v^{(s_n)}_{M-1}$ and $v^{(s_n)}_{M}$ both represent elements of $H_{\lambda}$, which in turn contradicts the regularity of $v^{(s_n)}_{1} \cdots v^{(s_n)}_{M}$.

\emph{Case $2:$} $\overline{v^{(s_n)}_{i} \cdots v^{(s_n)}_{M-1}z^{(n)}} \notin H_{\lambda}$.
Then there is some $u_n \in X \cup \mathcal{H}$ that does not lie in $\widetilde{H}_{\lambda}$ such that $q_n \defeq v^{(s_n)}_{i} \cdots v^{(s_n)}_{M-1}z^{(n)}u_n$ represents the identity in $G$.
We claim that $z^{(n)}$ is an isolated syllable in the cyclic word $[q_n]$.
Indeed, otherwise there would be some $i \leq j \leq M-1$ with $\overline{v^{(s_n)}_{j} \cdots v^{(s_n)}_{M-1}z^{(n)}} \in H_{\lambda}$, which is impossible as we have seen in \emph{Case $1$}.
Moreover we have $\norm{q_n} \leq M$.
From Lemma~\ref{lem:finite-cycles} it therefore follows that $\Set{\overline{z}^{(n)}_{1}}{n \in \N}$ is a finite subset of $G$.
Since $\abs{z^{(n)}}_X \geq n$, we see that there are only finitely many $n \in \N$ such that $\abs{v^{(s_n)}_{i} \cdots v^{(s_n)}_{M-1}z^{(n)}}_{X \cup \mathcal{H}} \leq 1$ for some $1 \leq i \leq M-1$.
Thus $v^{(s_n)}_{1} \cdots v^{(s_n)}_{M-1}z^{(n)}$ is regular for all but finitely many $n \in \N$.
Symmetric argumentation shows that $z^{(n)}w^{(t_n)}_{2} \cdots w^{(t_n)}_{N}$ is regular for all but finitely many $n \in \N$.
By restriction to a subsequence if necessary, we can therefore assume that the words $v^{(s_n)}_{1} \cdots v^{(s_n)}_{M-1} z^{(n)}$ and $ w^{(t_n)}_{2} \cdots w^{(t_n)}_{N}$ are regular for every $n$.

Suppose now that $v^{(s_n)}_{1} \cdots v^{(s_n)}_{M-1} z^{(n)} w^{(t_n)}_{2} \cdots w^{(t_n)}_{N}$ is not regular.
Then we can choose $1 \leq a_n \leq M-1$ and $2 \leq b_n \leq N$ such that $v^{(s_n)}_{a_n} \cdots v^{(s_n)}_{M-1} z^{(n)} w^{(t_n)}_{2} \cdots w^{(t_n)}_{b_n}$ is a minimal subword of $v^{(s_n)}_{1} \cdots v^{(s_n)}_{M-1} z^{(n)} w^{(t_n)}_{2} \cdots w^{(t_n)}_{N}$ with
\[
\abs{v^{(s_n)}_{a_n} \cdots v^{(s_n)}_{M-1} z^{(n)} w^{(t_n)}_{2} \cdots w^{(t_n)}_{b_n}}_{X \cup \mathcal{H}} \leq 1.
\]

\emph{Case $1:$} The word $q_n \defeq v^{(s_n)}_{a_n} \cdots v^{(s_n)}_{M-1} z^{(n)} w^{(t_n)}_{2} \cdots w^{(t_n)}_{b_n}$ represents the identity in $G$.
Since $v^{(s_n)}_{1} \cdots v^{(s_n)}_{M-1} z^{(n)}$ and $z^{(n)} w^{(t_n)}_{2} \cdots w^{(t_n)}_{N}$ are regular, it follows that $z^{(n)}$ is an isolated syllable in the cyclic word $[q_n]$.
In view of Lemma~\ref{lem:finite-cycles} we see that there are only finitely many such $n$.

\emph{Case $2:$} The word $v^{(s_n)}_{a_n} \cdots v^{(s_n)}_{M-1} z^{(n)} w^{(t_n)}_{2} \cdots w^{(t_n)}_{b_n}$ does not represent an element of $H_{\lambda}$.
Then there is some $u_n \in \bigcup \limits_{\mu\in \Lambda \setminus \{\lambda\}} (\widetilde{H}_{\lambda} \setminus \{1\}) \cup X$ such that
\[
q_n \defeq v^{(s_n)}_{a_n} \cdots v^{(s_n)}_{M-1} z^{(n)} w^{(t_n)}_{2} \cdots w^{(t_n)}_{b_n}u_n
\]
represents the trivial element in $G$.
In particular, $u_n$ is not part of a $\lambda$-syllable in the cyclic word $[q_n]$.
Another application of Lemma~\ref{lem:finite-cycles} now reveals that there are only finitely many $n \in \N$ such that $v^{(s_n)}_{a_n} \cdots v^{(s_n)}_{M-1} z^{(n)} w^{(t_n)}_{2} \cdots w^{(t_n)}_{b_n}$ does not represent an element of $H_{\lambda}$.

\emph{Case $3:$} The word $v^{(s_n)}_{a_n} \cdots v^{(s_n)}_{M-1} z^{(n)} w^{(t_n)}_{2} \cdots w^{(t_n)}_{b_n}$ represents a non-trivial element in $H_{\lambda}$.
Then there is some $u_n \in \widetilde{H}_{\lambda}$ such that
\[
q_n \defeq v^{(s_n)}_{a_n} \cdots v^{(s_n)}_{M-1} z^{(n)} w^{(t_n)}_{2} \cdots w^{(t_n)}_{b_n}u_n
\]
represents the identity in $G$.
Suppose that $z^{(n)}$ is connected to some further $\lambda$-syllable in the cyclic word $[q_n]$.
Since $v^{(s_n)}_{1} \cdots v^{(s_n)}_{M-1} z^{(n)}$ and $z^{(n)} w^{(t_n)}_{2} \cdots w^{(t_n)}_{N}$ are regular, $z^{(n)}$ has to be connected to $u_n$.
Hence we obtain $\overline{z^{(n)} w^{(t_n)}_{2} \cdots w^{(t_n)}_{b_n}u_n} \in H_{\lambda}$, which implies $\overline{w^{(t_n)}_{2} \cdots w^{(t_n)}_{b_n}} \in H_{\lambda}$.
From the regularity of $z^{(n)} w^{(t_n)}_{2} \cdots w^{(t_n)}_{N}$ it therefore follows that $N = 2$.
But then $\overline{w}^{(t_n)}_{2} \in H_{\lambda}$, which contradicts the regulatity of $w^{(t_n)}_{1}w^{(t_n)}_{2} \cdots w^{(t_n)}_{N}$.
Thus $u_n$ is an isolated syllable in $[q_n]$ and a final application of Lemma~\ref{lem:finite-cycles} proves that \emph{Case $3$} can only occur finitely many times.

Altogether we have shown that $v^{(s_n)}_{1} \cdots v^{(s_n)}_{M-1} z^{(n)} w^{(t_n)}_{2} \cdots w^{(t_n)}_{N}$ is regular for all but finitely many $n \in \N$, which proves the lemma.
\end{proof}

\begin{corollary}\label{cor:induction}
Let $G$ be a finitely generated group with a finite generating set $X$.
Suppose that $G$ is relatively finitely presented with respect to a collection of peripheral subgroups $H_{\Lambda} = \Set{H_{\lambda}}{\lambda \in \Lambda}$ and that the relative Dehn function $\delta_{G,H_{\Lambda}}^{rel}$ is well-defined.
Let $(w^{(n)})_{n \in \N}$ be a sequence of words over $X \cup \mathcal{H}$ that satisfies the alternating growth condition and let $K$ be the subgroup of $G$ generated by $\Set{\overline{w}_n}{n \in \N}$.
Then there is some $C \in \N$ that satisfies the following.
For every $L \in \N$ there is a sequence of words $(v_n)_{n \in \N}$ over $X \cup \mathcal{H}$ such that:
\begin{enumerate}
\item[\rm{1)}] $(v_n)_{n \in \N}$ satisfies the alternating growth condition.
\item[\rm{2)}] The length of every word $v_n$ is bounded by $L \leq \| v_n \| \leq L + C$.
\item[\rm{3)}] Every word $v_n$ represents an element of $K$.
\end{enumerate}
\end{corollary}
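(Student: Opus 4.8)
The plan is to prove Corollary~\ref{cor:induction} by iterating Lemma~\ref{lem:construction-step}, concatenating more and more copies of the given sequence $(w^{(n)})_{n \in \N}$ onto itself. Write $\ell \ge 2$ for the common length of the words $w^{(n)}$ and fix $\lambda_0 \in \Lambda$ with $w^{(n)}_1 \in \widetilde{H}_{\lambda_0}$ for every $n \in \N$; such a $\lambda_0$ exists because by condition~III) of Definition~\ref{def:ag-condition} the index $1$ is of type $\lambda_0$ for some $\lambda_0 \in \Lambda$, and by~II) this $\lambda_0$ does not depend on $n$. I would then prove, by induction on $k$, the following claim: for every $k \ge 1$ there are a natural number $\ell_k$ and a sequence $(u^{(n)}_k)_{n \in \N}$ of words over $X \cup \mathcal{H}$, each of length $\ell_k$, such that $(u^{(n)}_k)_{n \in \N}$ satisfies the alternating growth condition, each word $u^{(n)}_k$ represents an element of $K$, and moreover $\ell_1 = \ell$ and $1 \le \ell_{k+1} - \ell_k \le \ell$ for all $k$. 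The base case $k = 1$ is immediate with $u^{(n)}_1 \defeq w^{(n)}$.

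For the inductive step I would apply Lemma~\ref{lem:construction-step} to the two sequences $(u^{(n)}_k)_{n \in \N}$ and $(w^{(n)})_{n \in \N}$ (the second always being the original sequence) with respect to $\lambda_0$. The point to check is which of the two alternatives of that lemma applies, i.e.\ whether the image $\overline{u^{(n)}_{k,\ell_k}}$ of the last letter of $u^{(n)}_k$ lies in $H_{\lambda_0}$, and that this is independent of $n$. This follows from the alternating growth condition for $(u^{(n)}_k)_{n \in \N}$: by conditions~I) and~II) the index $\ell_k$ has a type that is the same for all $n$; if that type is $\lambda_0$ then $\overline{u^{(n)}_{k,\ell_k}} \in H_{\lambda_0}$ for all $n$, if it is some $\mu \in \Lambda \setminus \{\lambda_0\}$ then condition~V) forces $\overline{u^{(n)}_{k,\ell_k}} \notin H_{\lambda_0}$ for all $n$, and if it is $X$ then the last letter is a single fixed element of $X$. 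In the first alternative Lemma~\ref{lem:construction-step} produces, along a subsequence $(s_n)$, the sequence whose $n$-th term is $u^{(s_n)}_{k,1} \cdots u^{(s_n)}_{k,\ell_k}\, w^{(s_n)}_1 \cdots w^{(s_n)}_\ell$, of length $\ell_k + \ell$; in the second it produces, along subsequences $(s_n)$ and $(t_n)$, the sequence whose $n$-th term is $u^{(s_n)}_{k,1} \cdots u^{(s_n)}_{k,\ell_k-1}\, z^{(n)}\, w^{(t_n)}_2 \cdots w^{(t_n)}_\ell$, of length $\ell_k + \ell - 1$, where $z^{(n)} \in \widetilde{H}_{\lambda_0}$ represents $\overline{u^{(s_n)}_{k,\ell_k} w^{(t_n)}_1}$. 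In either case the lemma supplies the alternating growth condition, the new common length differs from $\ell_k$ by $\ell$ or $\ell - 1$ (so $1 \le \ell_{k+1} - \ell_k \le \ell$ since $\ell \ge 2$), and passing to images in $G$ — using $\overline{z^{(n)}} = \overline{u^{(s_n)}_{k,\ell_k}}\, \overline{w^{(t_n)}_1}$ in the second alternative — the new $n$-th word represents $\overline{u^{(s_n)}_k} \cdot \overline{w^{(s_n)}}$, respectively $\overline{u^{(s_n)}_k} \cdot \overline{w^{(t_n)}}$, hence an element of $K$ by the induction hypothesis together with the fact that the elements $\overline{w^{(j)}}$ generate $K$. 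This completes the induction.

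Granting the claim, the corollary follows by taking $C \defeq \ell$. The sequence $(\ell_k)_{k \ge 1}$ is strictly increasing with $\ell_1 = \ell$ and consecutive gaps at most $\ell$, hence tends to infinity, so for any $L \in \N$ there is a minimal $k = k(L)$ with $\ell_k \ge L$; then $\ell_k \le L + \ell$, either because $k = 1$ and $\ell_k = \ell$, or because $k \ge 2$ and then $\ell_{k-1} < L$. Putting $v_n \defeq u^{(n)}_{k(L)}$ gives a sequence satisfying the alternating growth condition with $L \le \norm{v_n} = \ell_{k(L)} \le L + C$ and with each $v_n$ representing an element of $K$, which is exactly what the corollary asks for.

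I expect the only delicate part to be the bookkeeping in the inductive step — namely, verifying at each stage that Lemma~\ref{lem:construction-step} really is applicable (which comes down to the uniform-in-$n$ dichotomy about whether $\overline{u^{(n)}_{k,\ell_k}} \in H_{\lambda_0}$), and checking that the amalgamation of the two junction letters in the second alternative does not destroy the property of representing an element of $K$. Beyond this, no new input is required: everything geometric is already packaged inside Lemma~\ref{lem:construction-step}, and hence ultimately inside Lemma~\ref{lem:finite-cycles}.
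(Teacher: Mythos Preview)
Your proof is correct and follows essentially the same approach as the paper: both iterate Lemma~\ref{lem:construction-step} to concatenate copies of $(w^{(n)})$ and take $C = \ell$. The only cosmetic difference is that the paper passes to a subsequence once at the outset so that either $\overline{w}^{(n)}_\ell \in H_\lambda$ for all $n$ or $\overline{w}^{(n)}_\ell \notin H_\lambda$ for all $n$, and then applies the same case of Lemma~\ref{lem:construction-step} at every step (yielding the explicit lengths $k\ell$ or $k(\ell-1)+1$), whereas you instead verify at each step that the dichotomy is forced by the type of the last index via conditions I), II), V).
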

\begin{proof}
Let us write $w^{(n)} = w^{(n)}_1 \cdots w^{(n)}_{\ell}$ for every $n \in \N$.
From the properties II) and III) of the alternating growth condition we know that there is some $\lambda \in \Lambda$ such that $w^{(n)}_1 \in \widetilde{H}_{\lambda}$ for every $n \in \N$.
By restriction to a subsequence if necessary, we may assume that $(w_n)_{n \in \N}$ satisfies one of the following two conditions:
\begin{enumerate}
\item[1)] $\overline{w}^{(n)}_{\ell} \notin H_{\lambda}$ for every $n \in \N$.
\item[2)] $\overline{w}^{(n)}_{\ell} \in H_{\lambda}$ for every $n \in \N$.
\end{enumerate}
Suppose that the first case is satisfied and let $k \in \N$.
Then an inductive application of the first case of Lemma~\ref{lem:construction-step} provides us with subsequences
\[
(w^{(s_{i,n})}_1 \cdots w^{(s_{i,n})}_{\ell})_{n \in \N}
\]
of $w^{(n)}$ for each $1 \leq  i \leq k$ such that the sequence of concatenated words
\[
v_n \defeq
(w^{(s_{1,n})}_1 \cdots w^{(s_{1,n})}_{\ell})
(w^{(s_{2,n})}_2 \cdots w^{(s_{2,n})}_{\ell})
\ldots
(w^{(s_{k,n})}_1 \cdots w^{(s_{k,n})}_{\ell})
\]
has length $k \ell$ and satisfies the alternating growth condition.
Thus the corollary is clearly satisfied for $C = \ell$.

Let us now consider case 2) and let $k \in \N$.
Then an inductive application of the second case of Lemma~\ref{lem:construction-step} provides us with subsequences
\[
(w^{(s_{i,n})}_1 \cdots w^{(s_{i,n})}_{\ell})_{n \in \N}
\]
of $w^{(n)}$ for each $1 \leq  i \leq k$ such that the sequence of words $v_n$ given by
\[
(w^{(s_{1,n})}_1 \cdots w^{(s_{1,n})}_{\ell-1}) z^{(t_{1,n})}
(w^{(s_{2,n})}_2 \cdots w^{(s_{2,n})}_{\ell-1}) z^{(t_{2,n})}
\ldots
z^{(t_{k-1,n})}
(w^{(s_{k,n})}_2 \cdots w^{(s_{k,n})}_{\ell}),
\]
where $z^{(t_{i,n})} \in \widetilde{H}_{\lambda}$ it the element representing $\overline{w^{(s_{i,n})}_{\ell}w^{(s_{i+1,n})}_1} \in H_{\lambda}$, satisfies the alternating growth condition.
In this case $v_n$ has length $k(\ell-1)+1$ and we see that the corollary is satisfied for $C = \ell$.\end{proof}

\section{Dichotomy of infinite subgroups}

Endowed with Corollary~\ref{cor:induction}, we are now ready to study the subgroup of a relatively finitely presented group $G$ that is generated by all the elements $\overline{w}_n$, where $(w_n)_{n \in \N}$ is a sequence that satisfies the alternating growth condition.

\begin{lemma}\label{lem:final}
Let $G$ be a finitely generated group with a finite generating set $X$.
Suppose that $G$ is relatively finitely presented with respect to a collection of peripheral subgroups $H_{\Lambda} = \Set{H_{\lambda}}{\lambda \in \Lambda}$ and that the relative Dehn function $\delta_{G,H_{\Lambda}}^{rel}$ is well-defined.
Suppose that $(w_n)_{n \in \N}$ is a sequence of words over $X \cup \mathcal{H}$ that satisfies the alternating growth condition.
Then the subgroup $K \leq G$
generated by $\Set{\overline{w}_n \in G}{n \in \N}$ is unbounded with respect to $d_{X \cup \mathcal{H}}$.
\end{lemma}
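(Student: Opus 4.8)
The plan is to argue by contradiction: I will assume $K$ is bounded, so that $D := \sup\Set{\abs{k}_{X \cup \mathcal{H}}}{k \in K}$ is finite, and then construct a short cyclic path in $\Gamma(G, X \cup \mathcal{H})$ whose existence is ruled out by Lemma~\ref{lem:finite-cycles}. The mechanism is that a regular word satisfying the alternating growth condition has many peripheral syllables whose underlying group elements have large $X$-length (condition~V of Definition~\ref{def:ag-condition}), whereas Lemma~\ref{lem:finite-cycles} forces an isolated component of a bounded-length loop to have bounded $X$-length.

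First I would apply Corollary~\ref{cor:induction} to $(w_n)_{n \in \N}$ to obtain the constant $C$, then fix an integer $L$ that is a suitable (explicit) linear function of $D$, say $L := 2D + 6$, and let $(v_n)_{n \in \N}$ be the sequence it produces: each $v_n$ satisfies the alternating growth condition, has $L \le \norm{v_n} \le L + C$, and represents an element of $K$. Set $m := L + C + D$ and let $\Omega_m \subseteq G$ be the finite set from Lemma~\ref{lem:finite-cycles}. Since $X$ is a genuine finite generating set of $G$, each element of $\Omega_m$ has finite $X$-length, so $D_m := \max\Set{\abs{g}_X}{g \in \Omega_m}$ is a finite number; I then fix once and for all an index $n > D_m$ and study $v_n$.

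Because $\overline{v_n} \in K$, there is a word $c_n$ over $X \cup \mathcal{H}$ with $\norm{c_n} \le D$ and $\overline{c_n} = \overline{v_n}^{\,-1}$. Choosing a path $p$ based at $1$ with $\Lab(p) = v_n$ that has a regular neighbourhood (possible by condition~VI of Definition~\ref{def:ag-condition}) and appending to it the path with label $c_n$, I obtain a cyclic path $q$ whose label represents $1$ in $G$ and with $\norm{q} = \norm{v_n} + \norm{c_n} \le m$. The core of the argument is a counting estimate for how many syllables of $v_n$ survive as isolated components of the loop $[q]$. By Remark~\ref{rem:components-are-edges} every syllable of $v_n$ is a single edge that is isolated inside $p$, and by conditions~III and~IV at least $\lceil \norm{v_n}/2 \rceil \ge D + 3$ of these syllables are of peripheral type. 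On the other hand, if a syllable $s$ of $v_n$ fails to be an isolated $\lambda$-component of $[q]$, then it either merges with, or is connected to, some $\lambda$-component of $[q]$ that uses at least one edge of the appended subpath with label $c_n$: it cannot merge with or be connected to another syllable of $v_n$, since transitivity of the connectedness relation would then contradict the isolatedness of $s$ inside $p$. Assigning to each such $s$ one such component and checking that this assignment is essentially injective, I bound the number of these ``bad'' syllables by $\norm{c_n} + 2 \le D + 2$; hence at least one peripheral-type syllable $s_0$ of $v_n$ is an isolated component of $[q]$.

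Finally the two descriptions of $s_0$ collide: Lemma~\ref{lem:finite-cycles} gives $\overline{\Lab(s_0)} \in \Omega_m$, so $\abs{\overline{\Lab(s_0)}}_X \le D_m < n$, while condition~V of the alternating growth condition, applied to the peripheral-type syllable $s_0$ of $v_n$, gives $\abs{\overline{\Lab(s_0)}}_X \ge n$. This contradiction shows $K$ is unbounded. I expect the counting estimate of the third paragraph to be the main obstacle: one must verify carefully that closing a regular word up with a short word can spoil the ``isolated component'' property for only boundedly many of its syllables, and handle honestly the (at most two) syllables of $v_n$ sitting at the seam with $c_n$ — which is the reason for allowing a little slack when choosing $L$.
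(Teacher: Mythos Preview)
Your proof is correct and follows essentially the same approach as the paper: assume $K$ is bounded, use Corollary~\ref{cor:induction} to produce a long word $v_n$ with the alternating growth condition representing an element of $K$, close it up with a short word to form a loop of bounded length, count that more peripheral syllables survive as isolated components than can be spoiled by the short closing word, and derive a contradiction from Lemma~\ref{lem:finite-cycles}. The only cosmetic differences are that the paper takes $L \geq 4N$ (your $D$) and obtains at least $N$ surviving isolated syllables rather than just one, and it phrases the contradiction as ``arbitrarily large as $n\to\infty$'' rather than fixing a single $n > D_m$; the counting argument that each component of the closing word connects to at most one syllable of $v_n$ is identical.
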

\begin{proof}
Suppose that $K$ is bounded with respect to $d_{X \cup \mathcal{H}}$, i.e.\ that there is some $N \in \N$ with $\abs{k}_{X \cup \mathcal{H}} \leq N$ for every $k \in K$.
Due to Corollary~\ref{cor:induction} there is a number $L \geq 4N$ and a sequence $(v_n)_{n \in \N}$ of words $v_n = v^{(n)}_1 \cdots v^{(n)}_{L}$ over $X \cup \mathcal{H}$ that satisfies the alternating growth condition such that each $v_n$ represents an element of $K$. 
By restriction to a subsequence, we can assume that there is some $M \in \N$ with $\abs{v_n}_{X \cup \mathcal{H}} = M \leq N$ for every $n \in \N$.
Let $u^{(n)}_1 \cdots u^{(n)}_M$ be a shortest word over $X \cup \mathcal{H}$ representing $\overline{v}_n^{-1}$.
Then each word $q_n \defeq v^{(n)}_1 \cdots v^{(n)}_{L} u^{(n)}_1 \cdots u^{(n)}_M$ represents the identity in $G$.
Recall that the alternating growth condition ensures that $v^{(n)}_1 \cdots v^{(n)}_{L}$ is regular and that two consecutive letters of $v_n$ do not lie in $X$.
It therefore follows that at least every second of its letters is an isolated syllable in $v_n$.
Thus there are at least $2N$ isolated syllables in $v_n = v^{(n)}_1 \cdots v^{(n)}_{L}$.
Note that for every $\lambda \in \Lambda$ and every $\lambda$-syllable of $u^{(n)}_1 \cdots u^{(n)}_M$, which necessarily consists of a single letter $u^{(n)}_{i}$, there is at most one $\lambda$-syllable $v^{(n)}_{j}$ in $v^{(n)}_1 \cdots v^{(n)}_{L}$ that is connected to $u^{(n)}_{i}$ in the cyclic word $[q_n]$.
Indeed, otherwise there would be a connection between two different isolated $\lambda$-syllables of $v^{(n)}_1 \cdots v^{(n)}_{L}$ by a $\lambda$-word.
This implies that there are at least $2N - M \geq N$ isolated syllables in $[q_n]$ that become arbitrarily large with respect to $X$ as $n$ goes to $\infty$.
But this is a contradiction to Lemma~\ref{lem:finite-cycles} since $\norm{q_n} \leq M+L$ for every $n \in \N$.
Thus it follows that $K$ is an unbounded subset of $\Gamma(G,X \cup \mathcal{H})$.
\end{proof}

\begin{lemma}\label{lem:infinite-intersection}
Let $G$ be a finitely generated group with a finite generating set $X$.
Suppose that $G$ is relatively finitely presented with respect to a collection of peripheral subgroups $H_{\Lambda} = \Set{H_{\lambda}}{\lambda \in \Lambda}$ and that the relative Dehn function $\delta_{G,H_{\Lambda}}^{rel}$ is well-defined.
Let $K \leq G$ be an infinite subgroup that is bounded with respect to $d_{X \cup \mathcal{H}}$.
Then there is an element $g \in G$ and an index $\eta \in \Lambda$ such that $\abs{gKg^{-1} \cap H_{\eta}} = \infty$.
\end{lemma}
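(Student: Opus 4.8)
The plan is to prove the contrapositive by contradiction: assuming that no conjugate of $K$ meets any $H_\eta$ in an infinite subgroup, I will construct from an infinite subset of $K$ a sequence of words satisfying the alternating growth condition whose associated elements all lie in a single conjugate of $K$, and then invoke Lemma~\ref{lem:final} to conclude that this conjugate --- hence $K$ itself, since boundedness in $d_{X \cup \mathcal H}$ is conjugation-invariant --- is unbounded, contradicting the hypothesis. So the heart of the argument is a \emph{bootstrapping} step: from the mere infinitude of $K$, together with its boundedness in the relative metric, produce one sequence satisfying Definition~\ref{def:ag-condition}.

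First I would fix, for each $k \in K$, a shortest word over $X \cup \mathcal H$ representing $k$; by hypothesis these all have length at most some $N$. Since $K$ is infinite and there are only finitely many words of length $\le N$ over any fixed finite alphabet, but $X \cup \mathcal H$ is infinite, the words representing elements of $K$ cannot all avoid the peripheral letters --- more carefully, since $X$ is finite, $K$ cannot lie in $\langle X \rangle$ up to bounded error unless $K \cap \langle X\rangle$ is itself infinite, and in that case one already gets elements whose $X$-length goes to infinity while the relative length stays bounded, which forces a peripheral syllable to appear (by Lemma~\ref{lem:finite-cycles}) in any geodesic relative word; pushing this through produces, for infinitely many $k$, a geodesic relative representative of the form $v^{(n)}_1 \cdots v^{(n)}_{\ell_n}$ in which some syllable $v^{(n)}_i$ satisfies $\abs{\overline{v}^{(n)}_i}_X \ge n$. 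By passing to a subsequence I may assume $\ell_n = \ell$ is constant, that each index $i$ keeps a fixed type (of type $X$ or of type $\lambda$ for a fixed $\lambda$), and that consecutive indices have distinct types and the first is peripheral --- i.e.\ I may arrange all of conditions I)--IV). Condition V) for the growing syllable is exactly what the Lemma~\ref{lem:finite-cycles} argument above gives; condition V) for the remaining syllables, and condition VI) (regularity), are secured by further restriction to a subsequence, using once more that a bounded-length relative word which fails regularity contains a short relative cycle and hence, by Lemma~\ref{lem:finite-cycles}, the offending syllables are drawn from a finite set, so we may throw away the finitely many bad indices. The upshot is a sequence $(w_n)_{n \in \N}$ satisfying the alternating growth condition with each $\overline{w}_n$ a conjugate $g^{-1} k_n g$ of an element of $K$ by a fixed $g$ (the $g$ arising from cyclically normalizing or from the choice of base syllable; it is harmless since we may conjugate $K$ at the outset).

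Then I would apply Lemma~\ref{lem:final} to this sequence: the subgroup generated by $\Set{\overline{w}_n}{n \in \N}$, which lies in $g^{-1} K g$, is unbounded with respect to $d_{X \cup \mathcal H}$. But $g^{-1} K g$ is bounded in $d_{X \cup \mathcal H}$ iff $K$ is (left translation by $g$ distorts the relative metric by only a bounded additive amount), contradicting the standing hypothesis that $K$ is bounded. Hence the assumption that $\abs{gKg^{-1} \cap H_\eta} < \infty$ for all $g \in G$ and all $\eta \in \Lambda$ is untenable, which is precisely the assertion of the lemma.

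The main obstacle is the step that manufactures the growing syllable and, simultaneously, condition V) --- i.e.\ showing that an infinite bounded-in-$d_{X\cup\mathcal H}$ subgroup $K$ must, after the assumed failure of conjugate-into-peripheral, contain elements forcing a peripheral syllable of unboundedly large $X$-length in \emph{every} relative geodesic representative, with that syllable avoiding every $H_\mu$ for $\mu \ne \lambda$ and having $X$-length at least $n$ whenever it happens to lie in $\langle X\rangle$. This is where the negated hypothesis ``$\abs{gKg^{-1}\cap H_\eta}<\infty$ for all $g,\eta$'' must really be used, presumably via Lemma~\ref{lem:finite-cycles} and a pigeonhole over the finitely many syllable-value sets $\Omega_n$, together with the almost-malnormality of the $H_\lambda$ to rule out a syllable secretly sitting inside two different peripheral cosets. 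The remaining passages to subsequences to clean up V) and VI) are routine compactness arguments of the kind already carried out in the proof of Lemma~\ref{lem:construction-step}.
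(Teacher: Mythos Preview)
Your high-level strategy --- manufacture from $K$ a sequence satisfying the alternating growth condition and then invoke Lemma~\ref{lem:final} --- is exactly the paper's plan. The gap is in the construction step, and it is a real one: passing to subsequences alone cannot secure conditions III), IV), and V) of Definition~\ref{def:ag-condition}. Concretely, if you fix geodesic representatives $u^{(n)}_1\cdots u^{(n)}_m$ over $X\cup\mathcal H$ and pass to a subsequence so that each index has a fixed type, nothing prevents two consecutive indices from both being of type $X$ (geodesicity only rules out two consecutive letters from the \emph{same} $\widetilde H_\lambda$), and nothing prevents a peripheral index $i$ from carrying a \emph{constant} letter $\widetilde h\in\widetilde H_\lambda$ for all $n$, in which case V) fails outright since $\abs{\overline h}_X$ is a fixed finite number. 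Your sentence ``by passing to a subsequence I may assume \ldots\ that consecutive indices have distinct types and the first is peripheral'' is where the argument breaks; no amount of thinning fixes these defects. (Incidentally, since $X$ generates $G$ here, $\langle X\rangle=G$ and the paragraph about $K\cap\langle X\rangle$ is vacuous.)

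The missing device is a \emph{minimality argument over finite relative generating sets and conjugates of $K$}. The paper takes $m$ minimal such that some conjugate $H=gKg^{-1}$ contains infinitely many elements of $(Y\cup\mathcal H)$-length exactly $m$ for \emph{some} finite relative generating set $Y$. If $m=1$ the lemma follows immediately (infinitely many of those length-$1$ elements lie in a single $H_\eta$, so $\abs{gKg^{-1}\cap H_\eta}=\infty$). If $m\ge 2$, the freedom to enlarge $Y$ does all the work your subsequences cannot: a peripheral index whose letter is constant gets its value adjoined to $Y$ and is retyped as an $X$-index; two consecutive $Y$-indices get their product adjoined to $Y$, dropping $m$ and contradicting minimality; if the first and last letters are both in $Y$ one conjugates by the first letter, again dropping $m$. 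What survives is a sequence in which every peripheral index carries letters with $\abs{\cdot}_Y\to\infty$ (giving V)), no two consecutive indices are of type $Y$ (giving IV)), and the first index can be taken peripheral (giving III)); regularity (VI)) is automatic from $\abs{\cdot}_{Y\cup\mathcal H}=m$. Then Lemma~\ref{lem:final} yields the contradiction. Note also that the paper never assumes the negation of the conclusion --- the contradiction in the $m\ge2$ branch is with boundedness of $K$, not with ``no infinite conjugate intersection'' --- so your attempt to feed the negated hypothesis into V) via almost-malnormality is on the wrong track.
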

\begin{proof}
Since $K$ is bounded with respect to $d_{X \cup \mathcal{H}}$, each of its conjugates $gKg^{-1}$ is a bounded subset of $\Gamma(G, X \cup \mathcal{H})$.
Let $m \in \N$ be minimal with the following property:

\bigskip

\begin{enumerate}
\item[$(\ast)$] There is a conjugate $H \defeq gKg^{-1}$ of $K$, a finite relative generating set $Y$ of $G$, and an infinite sequence $(k_n)_{n \in \N}$ of pairwise distinct elements $k_n \in H$ with $\abs{k_n}_{Y \cup \mathcal{H}} = m$ for every $n \in \N$.
\end{enumerate}

\bigskip

Let $g,Y$ and $(k_n)_{n \in \N}$ be as in $(\ast)$.
For each $n$ let $u^{(n)} = u^{(n)}_1 \cdots u^{(n)}_{m}$ be a (shortest) word over $Y \cup \mathcal{H}$ that represents $k_n$.
Due to the minimality of $m$, we can extend $Y$ to any finite relative generating set $Y'$ of $G$ such that $(\ast)$ is still satisfied for an appropriate subsequence of $(k_n)_{n \in \N}$.
Since $G$ is finitely generated, we can therefore assume that $Y$ is a symmetric generating set of $G$.

Suppose first that $m = 1$.
Then $u^{(n)}_1 \in \mathcal{H} = \bigcup \limits_{\lambda \in \Lambda} ( \widetilde{H}_{\lambda} \setminus \{ 1 \})$ for all but finitely many $n \in \N$.
Since $\Lambda$ is finite by Theorem~\ref{thm:finiteness}, there is some $\eta \in \Lambda$ such that infinitely many pairwise distinct letters $u^{(n)}_1$ lie in $\widetilde{H}_{\eta}$.
It therefore follows that $\abs{gKg^{-1} \cap H_\eta} = \infty$.
%Next we will show that the assumption $m \geq 2$ leads to a contradiction.

Let us now consider the case $m \geq 2$.
We want to modify $Y$ and $u^{(n)}$ in such a way that some subsequence of $(u^{(n)})_{n \in \N}$ satisfies the alternating growth condition.
This will be done inductively by going through the letters $u^{(n)}_i$ of $u^{(n)}$.

Suppose that $u^{(n)}_1 \in Y$ for infinitely many $n \in \N$.
Then we can choose some $x_1 \in Y$ and a subsequence $(k_{j_n})_{n \in \N}$ of $(k_n)_{n \in \N}$ with $u^{(j_n)}_1 = x_1$ for every $n$.
In this case we replace $(k_n)_{n \in \N}$ by $(k_{j_n})_{n \in \N}$.

Suppose next that $u^{(n)}_1 \in \mathcal{H}$ for all but finitely many $n \in \N$.
Since $\Lambda$ is finite, there is some $\lambda_1 \in \Lambda$ with $u^{(n)}_1 \in \widetilde{H}_{\lambda_1}$ for infinitely many $n \in \N$.
We have to consider $2$ cases.

\emph{Case $1:$} There is some $\widetilde{h}_1 \in \widetilde{H}_{\lambda_1}$ with $u^{(n)}_1 = \widetilde{h}_1$ for infinitely many $n \in \N$.
Then we restrict $(k_{n})_{n \in \N}$ to a subsequence $(k_{j_n})_{n \in \N}$ such that $u^{(j_n)}_1 = \widetilde{h}_1$ for every $n \in \N$.
Moreover we add $h_1$ and $h_1^{-1}$ to $Y$ and replace the letter $u^{(j_n)}_1 = \widetilde{h}_1 \in \widetilde{H}_{\lambda_1}$ in $u^{(j_n)}$ by $h_1 \in Y$ for every $n \in \N$.
Next we replace the resulting sequence by a subsequence that satisfies $(\ast)$, which is possible by the choice of $m$.

\emph{Case $2:$} There is no $\widetilde{h}_1 \in \widetilde{H}_{\lambda_1}$ with $u^{(n)}_1 = \widetilde{h}_1$ for infinitely many $n \in \N$.
In this case we replace $(u^{(n)})_{n \in \N}$ by a subsequence $(u^{(j_n)})_{n \in \N}$ such that $\abs{\overline{u}^{(j_n)}_1}_Y > n$ for every $n \in \N$.

We proceed analogously with the other indices $i \in \{2, \ldots ,m\}$.
The resulting sequence of words over $Y \cup \mathcal{H}$ will be denoted by $(v^{(n)}_1 \cdots v^{(n)}_{m})_{n \in \N}$.
Let $g_n \in H$ be the element represented by $v^{(n)}_1 \cdots v^{(n)}_{m}$.

Suppose that either two consecutive letters $v^{(n)}_{i},v^{(n)}_{i+1}$ or the letters $v^{(n)}_{1},v^{(n)}_{m}$ both lie in $Y$.
Then we could add $v^{(n)}_{i} v^{(n)}_{i+1}$ and $(v^{(n)}_{i} v^{(n)}_{i+1})^{-1}$ (respectively $v^{(n)}_{m} v^{(n)}_{1}$ and $(v^{(n)}_{m} v^{(n)}_{1})^{-1}$) to $Y$ in order to obtain a shorter sequence of infinitely many pairwise distinct elements of $H$ (respectively of $v^{(n)}_{-1}Hv^{(n)}_{1}$) with respect to $d_{Y \cup \mathcal{H}}$.
But this is a contradiction to the choice of $m$.
Thus it follows that neither $v^{(n)}_{i},v^{(n)}_{i+1}$ nor $v^{(n)}_{1},v^{(n)}_{m}$ both lie in $Y$.
In particular we see that we can replace $v^{(n)}_1 \cdots v^{(n)}_{m}$ by its inverse $(v^{(n)}_m)^{-1} \cdots (v^{(n)}_{1})^{-1}$ to ensure that the first letter does not lie in $Y$.
Let us therefore assume that $v^{(n)}_1$ is never contained in $Y$.
In order to prove that $(v^{(n)}_1 \cdots v^{(n)}_{m})_{n \in \N}$ satisfies the alternative growth condition, it remains to show that each $v^{(n)}_1 \cdots v^{(n)}_{m}$ is regular and that two consecutive letters $v^{(n)}_{i},v^{(n)}_{i+1}$ cannot lie in the same group $\widetilde{H}_{\lambda}$.
But these properties are direct consequences of the condition $\abs{g_n}_{Y \cup \mathcal{H}} = m$ from $(\ast)$, where $k_n$ plays the role of $g_n$.
Altogether we have shown that there is a conjugate $H$ of $K$ and a sequence $(g_n)_{n \in \N}$ of elements in $H$, that can be represented by a sequence $(v^{(n)}_1 \cdots v^{(n)}_{m})_{n \in \N}$ of words over $Y \cup \mathcal{H}$ that satisfies the alternating growth condition.
In this case Lemma~\ref{lem:final} tells us that $H$ is an unbounded subset of $\Gamma(G,Y \cup \mathcal{H})$, which clearly constradicts our assumption that $K$ is a bounded subset of $\Gamma(G,X \cup \mathcal{H})$.
Hence $m = 1$, in which case we have already proven the lemma.
\end{proof}

We are now ready to prove our main theorem.

\begin{theorem}\label{thm:main}
Let $G$ be a finitely generated group and let $X$ be a finite generating set of $G$.
Suppose that $G$ is relatively finitely presented with respect to a collection of peripheral subgroups $H_{\Lambda} = \Set{H_{\lambda}}{\lambda \in \Lambda}$ and that the relative Dehn function $\delta_{G,H_{\Lambda}}^{rel}$ is well-defined.
Then every subgroup $K \leq G$ satisfies exactly one of the following conditions:
\begin{enumerate}
\item[\rm{1)}] $K$ is finite.
\item[\rm{2)}] $K$ is infinite and conjugated to a subgroup of a peripheral subgroup.
\item[\rm{3)}] $K$ is unbounded in $\Gamma(G, X \cup \mathcal{H})$.
\end{enumerate}
\end{theorem}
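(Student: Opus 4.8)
The plan is to separate the statement into its two halves: that the three conditions are pairwise incompatible, and that at least one of them holds. Only the second half requires real work, and it amounts to promoting Lemma~\ref{lem:infinite-intersection} from ``infinite intersection with a peripheral subgroup'' to ``contained in a peripheral subgroup''. For incompatibility I would first note that each non-trivial element of a peripheral subgroup $H_\lambda$ is a single edge of $\Gamma(G,X\cup\mathcal H)$, so $H_\lambda$ has diameter $\le 2$ there; since left translation is an isometry of $\Gamma(G,X\cup\mathcal H)$ and right translation by $g$ changes distances by at most $2\abs g_{X\cup\mathcal H}$, every conjugate of a subgroup of some $H_\lambda$ has finite diameter in $\Gamma(G,X\cup\mathcal H)$, and likewise boundedness of a subgroup is conjugation-invariant. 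Hence any $K$ satisfying (1) or (2) is bounded in $\Gamma(G,X\cup\mathcal H)$ and so cannot satisfy (3), while (1) and (2) exclude each other because (2) requires $K$ infinite.

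For the existence of a valid case, suppose $K$ is infinite and bounded in $\Gamma(G,X\cup\mathcal H)$; I must find $g\in G$ and $\eta\in\Lambda$ with $K\le g^{-1}H_\eta g$. Lemma~\ref{lem:infinite-intersection} provides such $g$ and $\eta$ with $L\defeq H\cap H_\eta$ infinite, where $H\defeq gKg^{-1}$ is again bounded, and it remains to prove $H\le H_\eta$. Before that, I record that if $1\ne a\in H_\eta\cap H_\mu$ with $\mu\ne\eta$, then picking letters $b\in\widetilde H_\eta$ and $c\in\widetilde H_\mu$ that both represent $a$, the cyclic word $bc^{-1}$ of length $2$ represents the identity and its only $\eta$-component is isolated, so Lemma~\ref{lem:finite-cycles} forces $a\in\Omega_2$; thus $H_\eta\cap H_\mu$ is finite for every $\mu\ne\eta$. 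Now assume for contradiction that there is $h\in H\setminus H_\eta$, enlarge $X$ to the finite symmetric generating set $X'\defeq X\cup\{h,h^{-1}\}$ (which changes neither the well-definedness of $\delta_{G,H_\Lambda}^{rel}$ nor boundedness in the relative Cayley graph, since $d_{X\cup\mathcal H}$ and $d_{X'\cup\mathcal H}$ are bi-Lipschitz), and --- using that $L$ is infinite, that balls in $(G,X')$ are finite, and that $H_\eta\cap H_\mu$ is finite for $\mu\ne\eta$ --- choose pairwise distinct $\ell_n\in L$ with $\abs{\ell_n}_{X'}\ge n$, with $\ell_n\notin\bigcup_{\mu\ne\eta}H_\mu$, and with $\ell_n h\notin\{1\}\cup X'\cup\bigcup_{\mu\in\Lambda}H_\mu$. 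Writing $\widetilde\ell_n\in\widetilde H_\eta$ for the letter representing $\ell_n$, the length-$2$ words $v^{(n)}\defeq\widetilde\ell_n h$ then satisfy conditions I)--VI) of Definition~\ref{def:ag-condition} relative to $X'$: index $1$ is of type $\eta$ and index $2$ is the $X'$-letter $h$, which gives I)--IV); the two clauses of V) for index $1$ are precisely $\ell_n\notin\bigcup_{\mu\ne\eta}H_\mu$ and $\abs{\ell_n}_{X'}\ge n$; and VI) holds because $\abs{\overline{v^{(n)}}}_{X'\cup\mathcal H}=\abs{\ell_n h}_{X'\cup\mathcal H}\ge 2$ by our choice of $\ell_n$.

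Lemma~\ref{lem:final} then says that the subgroup of $G$ generated by $\Set{\overline{v^{(n)}}}{n\in\N}=\Set{\ell_n h}{n\in\N}\subseteq H$ is unbounded in $\Gamma(G,X'\cup\mathcal H)$, hence $H$ is unbounded there, hence so is $K$, contradicting that $K$ is bounded; therefore $H\le H_\eta$ and $K\le g^{-1}H_\eta g$, so $K$ satisfies (2). I expect the main obstacle to be this last step: arranging the auxiliary generator $h$, confirming that passing to $X'$ is harmless, and verifying the alternating growth condition for the $v^{(n)}$ without circularity. The remark that $H_\eta\cap H_\mu$ is finite --- obtained directly from Lemma~\ref{lem:finite-cycles} rather than from the almost malnormality results of~\cite{Osin06} --- is what keeps the argument self-contained.
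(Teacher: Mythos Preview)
Your proof is correct and follows essentially the same route as the paper: reduce to $K$ infinite and bounded, invoke Lemma~\ref{lem:infinite-intersection} to get an infinite intersection $gKg^{-1}\cap H_\eta$, pick $a\in gKg^{-1}\setminus H_\eta$ (your $h$), and feed the length-$2$ sequence $(\widetilde\ell_n\, a)$ into Lemma~\ref{lem:final} to obtain a contradiction. The only notable differences are cosmetic: you pre-select the $\ell_n$ to satisfy all six conditions of Definition~\ref{def:ag-condition} at once, whereas the paper argues regularity by contradiction, and you derive the finiteness of $H_\eta\cap H_\mu$ directly from Lemma~\ref{lem:finite-cycles} instead of citing \cite[Proposition~2.36]{Osin06}.
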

\begin{proof}
Suppose that $K$ is infinite and bounded as a subset of $\Gamma(G, X \cup \mathcal{H})$.
From Lemma~\ref{lem:infinite-intersection} we know that there is an index $\eta \in \Lambda$ and an element $g \in G$ such that the $gKg^{-1} \cap H_{\eta}$ is infinite.
We can therefore choose a sequence $(h_n)_{n \in \N}$ of distinct, non-trivial elements $h_n \in gKg^{-1} \cap H_{\eta}$.
Suppose that $gKg^{-1}$ is not a subgroup of $H_{\eta}$ and let $a \in gKg^{-1} \setminus H_{\eta}$.
Let $\widetilde{h}_n \in \widetilde{H}_{\eta}$ be the element representing $h_n$.
Then, after adding $\{a,a^{-1}\}$ to $X$ if necessary, we can consider the sequence of words $(\widetilde{h}_n a)_{n \in \N}$ over $X \cup \mathcal{H}$.
We claim that $(\widetilde{h}_n a)_{n \in \N}$ has a subsequence that satisfies the alternating growth condition.
The only property that is not directly evident is that $(\widetilde{h}_n a)_{n \in \N}$ has a subsequence consisting of regular words.
Suppose that this is not the case.
Since $\Lambda$ is finite by Theorem~\ref{thm:finiteness}, it then follows that there is some $\mu \in \Lambda$ such that $\widetilde{h}_n a$ represents an element in $H_{\mu}$ for infinitely many $n \in \N$.
Then $\widetilde{h}_m a (\widetilde{h}_n a)^{-1} = \widetilde{h}_m \widetilde{h}_n^{-1}$ represents an element in $H_{\mu} \cap H_{\eta}$ whenever $\widetilde{h}_m a$ and $\widetilde{h}_n a$ both represent element of $H_{\mu}$.
Since $a$ was chosen outside of $H_{\eta}$, it moreover follows that $\widetilde{h}_n a$ can never represent an element of $H_{\eta}$.
In particular we see that $\eta \neq \mu$.
But this is a contradiction to~\cite[Proposition 2.36]{Osin06} which says that $H_{\mu} \cap H_{\eta}$ is finite for $\mu \neq \eta$.
Thus $(\widetilde{h}_n a)_{n \in \N}$ has a subsequence that satisfies the alternating growth condition.
In this case Lemma~\ref{lem:final} tells us that the subgroup $\langle \Set{ah_n}{n \in \N} \rangle$ of $gKg^{-1}$ is an unbounded in $\Gamma(G, X \cup \mathcal{H})$, which is contradicts our assumption that $K$ is bounded in $\Gamma(G, X \cup \mathcal{H})$.
Finally this proves that $gKg^{-1}$ is a subgroup of $H_{\eta}$.
\end{proof}

Let us now consider the important special case of Theorem~\ref{introthm:main}, where $G$ is relatively hyperbolic with respect to $H_{\Lambda}$.
Recall that an element $g \in G$ is called \emph{loxodromic}
%(for the left multiplication action of $G$ on $\Gamma(G,X \cup \mathcal{H})$)
if the map
\[
\Z \rightarrow \Gamma(G,X \cup \mathcal{H}),\ n \mapsto g^n
\]
is a quasiisometrical embedding.
It is known that a subgroup $K \leq G$ with infinite diameter in $\Gamma(G, X \cup \mathcal{H})$ contains a loxodromic element.
This follows from a corresponding result for acylindrially hyperbolic groups~\cite[Theorem~1.1]{Osin16} and the fact that relatively hyperbolic groups act acylindrically on the (hyperbolic) graph $\Gamma(G, X \cup \mathcal{H})$~\cite[Proposition~5.2]{Osin16}.

\begin{corollary}\label{cor:main}
Let $G$ be a finitely generated group.
Suppose that $G$ is relatively hyperbolic with respect to a collection $H_{\Lambda} = \Set{H_{\lambda}}{\lambda \in \Lambda}$ of its subgroups.
Then every subgroup $K \leq G$ satisfies exactly one of the following conditions:
\begin{enumerate}
\item[\rm{1)}] $K$ is finite.
\item[\rm{2)}] $K$ is infinite and conjugate to a subgroup of some $H_{\lambda}$.
\item[\rm{3)}] $K$ contains a loxodromic element.
\end{enumerate}
\end{corollary}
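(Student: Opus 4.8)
The plan is to obtain the corollary as an essentially immediate consequence of Theorem~\ref{thm:main} together with the classification of acylindrical actions on hyperbolic spaces. First I would fix a finite generating set $X$ of $G$ and a finite relative presentation of $G$ with respect to $H_{\Lambda}$; since $G$ is relatively hyperbolic, Definition~\ref{def:rel-hyp} guarantees that $\delta_{G,H_{\Lambda}}^{rel}$ is (linear, hence) well-defined. Thus Theorem~\ref{thm:main} applies verbatim and tells us that every subgroup $K \leq G$ satisfies exactly one of the following: $K$ is finite; $K$ is infinite and conjugate into some $H_{\lambda}$; $K$ is unbounded in $\Gamma(G, X \cup \mathcal{H})$.

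It therefore only remains to replace the third alternative by ``$K$ contains a loxodromic element''. As recalled just before the statement, $\Gamma(G, X \cup \mathcal{H})$ is hyperbolic and $G$ acts on it acylindrically by \cite[Proposition~5.2]{Osin16}, so $G$ is acylindrically hyperbolic. Restricting this action to $K$ and invoking the trichotomy for groups acting acylindrically on hyperbolic spaces \cite[Theorem~1.1]{Osin16}, we get that either $K$ has bounded orbits on $\Gamma(G, X \cup \mathcal{H})$ --- which is impossible when $K$ is unbounded --- or $K$ contains an element that acts loxodromically, i.e.\ a loxodromic element in the sense defined above. Conversely, a loxodromic element clearly witnesses that $K$ is unbounded, so the two reformulations of the third alternative agree.

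Finally I would record why the three conditions remain mutually exclusive: a loxodromic element has infinite order, which rules out 1) against 3); every conjugate of a peripheral subgroup $H_{\lambda}$ is bounded in $\Gamma(G, X \cup \mathcal{H})$, since its elements are single generators, so such a conjugate contains no loxodromic element, ruling out 2) against 3); and condition 2), as inherited from Theorem~\ref{thm:main}, already stipulates that $K$ is infinite, ruling it out against 1). No genuine difficulty arises here beyond Theorem~\ref{thm:main} itself; the only step needing a little care is the identification of ``unbounded in the relative Cayley graph'' with ``contains a loxodromic element'', which rests precisely on the acylindricity of the action of $G$ on $\Gamma(G, X \cup \mathcal{H})$ and the resulting classification of its subgroups.
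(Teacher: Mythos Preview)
Your proposal is correct and follows essentially the same approach as the paper: the paper does not give a formal proof of Corollary~\ref{cor:main} but explains in the paragraph preceding it that the third alternative of Theorem~\ref{thm:main} upgrades to ``$K$ contains a loxodromic element'' via \cite[Theorem~1.1]{Osin16} combined with the acylindricity of the action on $\Gamma(G,X\cup\mathcal{H})$ from \cite[Proposition~5.2]{Osin16}. Your write-up spells out the mutual exclusivity and the converse direction more explicitly than the paper does, but the content is the same.
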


\section{Applications}

As an application of the classification of subgroups of relatively hyperbolic groups given in Corollary~\ref{cor:main}, we prove the existence of the relative exponential growth rate for all subgroups of a large variety of relatively hyperbolic groups.

\begin{definition}\label{def:rel-growth}
Let $G$ be a finitely generated group and let $X$ be a finite generating set of $G$.
Given a subgroup $H \leq G$, we define the \emph{relative growth function} of $H$ in $G$ with respect to $X$ by
\[
\beta^X_H \colon \N \rightarrow \N,\ n \mapsto \abs{B^X_H(n)},
\]
where $B^X_H(n)$ denotes the set of elements in $H$ that are represented by words of length at most $n$ over $X \cup X^{-1}$.
The relative exponential growth rate of $H$ in $G$ with respect to $X$ is defined by $\limsup \limits_{n \rightarrow \infty} \sqrt[n]{\beta^X_H(n)}$.
\end{definition}

It is natural to ask whether $\limsup$ can be replaced by $\lim$, i.e.\ whether the limit $\lim \limits_{n \rightarrow \infty} \sqrt[n]{\beta^X_H(n)}$ exists.
Unlike in the important special case $H = G$, in which this limit
%$\lim \limits_{n \rightarrow \infty} \sqrt[n]{\beta^X_G(n)}$
is well-known to exist (see e.g.~\cite{Milnor68}), it does now exist in general (see~\cite[Remark~3.1]{Olshanskii17}).
In the case where the limit $\lim \limits_{n \rightarrow \infty} \sqrt[n]{\beta^X_H(n)}$ does exist, we will say that the relative exponential growth rate of $H$ in $G$ exists with respect to $X$.
The following result provides us with a large variety of finitely generated relatively hyperbolic groups $G$ for which the relative exponential growth rate exists for every of its subgroups and generating sets.

\begin{theorem}\label{thm:existence-subexponential}
Let $G$ be a finitely generated group that is relatively hyperbolic with respect to a collection $H_{\Lambda} = \Set{H_{\lambda}}{\lambda \in \Lambda}$ of its subgroups.
Suppose that each of the groups $H_{\lambda}$ has subexponential growth.
Then the relative exponential growth rate of every subgroup $K \leq G$ exists with respect to every finite generating set of $G$.
\end{theorem}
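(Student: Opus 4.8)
The plan is to reduce the claim to the trichotomy of Corollary~\ref{cor:main} combined with the author's already available existence result for subgroups that contain a generalized loxodromic element. Fix a subgroup $K \le G$ and a finite generating set $X$ of $G$. By Corollary~\ref{cor:main} exactly one of the following holds: $K$ is finite; $K$ is infinite and conjugate to a subgroup of some $H_{\eta}$; or $K$ contains an element that is loxodromic in $\Gamma(G, X \cup \mathcal{H})$. I would show in each of these three cases that $\lim_{n \to \infty}\sqrt[n]{\beta^X_K(n)}$ exists.

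If $K$ is finite, then $\beta^X_K(n) = \abs{K}$ for all $n \ge \max_{k \in K}\abs{k}_X$, so $\sqrt[n]{\beta^X_K(n)} \to 1$. Suppose now that $K$ contains a loxodromic element $g$ of $\Gamma(G, X \cup \mathcal{H})$. Since $G$ acts acylindrically on the hyperbolic graph $\Gamma(G, X \cup \mathcal{H})$ by~\cite[Proposition~5.2]{Osin16}, the element $g$ is loxodromic for an acylindrical action of $G$ on a hyperbolic space, i.e.\ $g$ is a generalized loxodromic element of $G$. Unless $G$ is virtually cyclic --- in which case every subgroup of $G$ has at most linear relative growth and the claim is immediate --- it follows that $G$ is acylindrically hyperbolic, and then the existence of the relative exponential growth rate of $K$ in $G$ with respect to every finite generating set of $G$ is precisely~\cite{Schesler22}.

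It remains to treat the case where there is $g \in G$ with $L \defeq gKg^{-1} \le H_{\eta}$, where $K$ (hence $L$) is infinite. Conjugation by $g$ changes $X$-length by at most $2\abs{g}_X$, so the bijection $k \mapsto gkg^{-1}$ from $K$ onto $L$ gives $\beta^X_K(n) \le \beta^X_L(n + 2\abs{g}_X) \le \beta^X_{H_{\eta}}(n + 2\abs{g}_X)$ for all $n \in \N$. Hence it suffices to show that the relative ball function $\beta^X_{H_{\eta}}$ grows subexponentially, for then $\limsup_{n \to \infty}\sqrt[n]{\beta^X_K(n)} \le 1$, while $\sqrt[n]{\beta^X_K(n)} \ge 1$ for every $n$, so the limit exists and equals $1$. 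Now $H_{\eta}$ is finitely generated by Theorem~\ref{thm:finiteness}; fix a finite generating set $Y$ of $H_{\eta}$. Peripheral subgroups of relatively hyperbolic groups are undistorted (see e.g.~\cite{Osin06}), so there are constants $A, B > 0$ with $\abs{h}_Y \le A\abs{h}_X + B$ for every $h \in H_{\eta}$; consequently $\beta^X_{H_{\eta}}(m) \le \beta^Y_{H_{\eta}}(Am + B)$ for all $m$. Since $H_{\eta}$ has subexponential growth, $\beta^Y_{H_{\eta}}(k)^{1/k} \to 1$, and therefore $\beta^X_{H_{\eta}}(m)^{1/m} \le \bigl(\beta^Y_{H_{\eta}}(Am+B)^{1/(Am+B)}\bigr)^{(Am+B)/m} \to 1$, as required.

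The main obstacle is the peripheral case, and within it the passage from the \emph{intrinsic} subexponential growth of $H_{\eta}$ to subexponential growth of the \emph{relative} ball function $\beta^X_{H_{\eta}}$; this is exactly the point where undistortion of the peripheral subgroups is used. I would make sure to give a precise reference for this undistortion statement, or, to keep the argument self-contained, to deduce it from the linear relative Dehn function by a van Kampen diagram estimate: given $h \in H_{\eta}$ with $\abs{h}_X \le n$, bound $\abs{h}_Y$ in terms of the relative area of a diagram for the product of $\widetilde{h}^{-1}$ with an $X$-geodesic word representing $h$, using that $\mathcal{R}$ is finite. All the remaining verifications are the routine estimates indicated above.
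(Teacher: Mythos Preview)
Your proposal is correct and follows essentially the same route as the paper: both split along the trichotomy of Corollary~\ref{cor:main}, dispose of the finite case trivially, invoke \cite[Proposition~5.2]{Osin16} and \cite{Schesler22} for the loxodromic case (splitting off the virtually cyclic situation), and in the peripheral case use finite generation of $H_{\eta}$ (Theorem~\ref{thm:finiteness}) together with undistortion of peripheral subgroups (\cite[Lemma~5.4]{Osin06}) to transfer intrinsic subexponential growth to subexponential relative growth. The only cosmetic difference is that the paper works directly with a generating set of $gH_{\lambda}g^{-1}$, whereas you first pass from $K$ to $L=gKg^{-1}\le H_{\eta}$ via the conjugation bound $\beta^X_K(n)\le\beta^X_{H_{\eta}}(n+2\abs{g}_X)$; both lead to the same estimate.
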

\begin{proof}
Let $X$ be a finite generating set of $G$.
We go through the $3$ cases of Corollary~\ref{cor:main}.

Suppose first that $K$ is finite.
Then $\beta^X_K$ is eventually constant and it trivially follows that $\lim \limits_{n \rightarrow \infty} \sqrt[n]{\beta^X_K(n)}$ exists and is equal to $1$.

Let us next consider the case where $K$ contains a loxodromic element $k$.
By~\cite[Proposition~5.2]{Osin16}, $G$ acts acylindrically on the (hyperbolic) graph $\Gamma(G, X \cup \mathcal{H})$.
It this case~\cite[Theorem~1.1]{Osin16} tells us that either $G$ is virtually cyclic, in which case the claim follows trivially, or $G$ is acylindrically hyperbolic, in which case the claim is covered by~\cite[Theorem 5.8]{Schesler22}.
%Since $k$ is loxodromit it follows that $K$ has unbounded orbits in $\Gamma(G, X \cup \mathcal{H})$.

Consider now the remaining case, where $K$ is infinite and conjugated to a subgroup of some peripheral subgroup.
Thus there is some $g \in G$ and some $\lambda \in \Lambda$ such that $K \leq gH_{\lambda}g^{-1}$.
By Theorem~\ref{thm:finiteness} each $H_{\lambda}$, and hence $gH_{\lambda}g^{-1}$, is finitely generated.
We can thererore choose a finite generating set $Y$ of $gH_{\lambda}g^{-1}$.
Moreover it follows from~\cite[Lemma 5.4]{Osin06} that each peripheral subgroup, and hence $gH_{\lambda}g^{-1}$, is undistorted in $G$.
We can therefore choose a constant $C > 0$ such that
\begin{equation}\label{eq:existence-subexponential}
\beta^X_{gH_{\lambda}g^{-1}}(n) \leq \beta^Y_{gH_{\lambda}g^{-1}}(Cn)
\end{equation}
for every $n \in \N$.
By assumption each $H_{\lambda}$, and therefore $gH_{\lambda}g^{-1}$, has subexponential growth.
Thus we have $\lim \limits_{n \rightarrow \infty} \beta^Y_{gH_{\lambda}g^{-1}}(n) / a^{n} = 0$ for every $a > 1$.
In view of~\eqref{eq:existence-subexponential}, this implies that
\[
\lim \limits_{n \rightarrow \infty} \beta^X_{gH_{\lambda}g^{-1}}(n) / a^{n} = 0.
\]
Since $\beta^X_{K}(n) \leq \beta^X_{gH_{\lambda}g^{-1}}(n)$ for $n \in \N$, we see that $\lim \limits_{n \rightarrow \infty} \sqrt[n]{\beta^X_{K}(n)} = 1$ and in particular that the limit exists.
\end{proof}

%Many finitely generated relatively hyperbolic groups that appear in the literature have virtually nilpotent peripheral subgroups.
%As virtually nilpotent groups are well-known to have polynomial growth, those relatively hyperbolic groups are covered by Theorem~\ref{thm:existence-subexponential}.
%As mentioned in the introduction, limit groups are known to be relatively 
%%A particularly interesting such class of relatively hyperbolic groups are limit groups, which were introduced by Zela in his solution of the Tarski problems~\cite{Sela01}.
%In~\cite{Dahmani03} and~\cite{Alibegovic05} it is shown that a limit group is relatively hyperbolic with respect to a system of representatives for its conjugacy classes of maximal non-cyclic abelian subgroups.
%We therefore obtain the following.

%\begin{corollary}\label{cor:existence-limit-groups}
%Let $G$ be a limit group.
%Then the relative exponential growth rate of every subgroup $H \leq G$ exists with respect to every finite generating set of $G$.
%\end{corollary}

\bibliographystyle{amsplain}
\bibliography{Literaturverzeichnis}

\end{document}